\tikzset{interpolation point/.style={circle, draw, fill=black, scale=.8}}
\tikzset{control point/.style={circle, draw, scale=.6}}
\tikzset{calc point/.style={circle, draw, fill=gray, scale=.6}}
\newcommand*\decasteljautime{0.25}
\tikzset{intnode/.style={control point, scale=.7, pos=\decasteljautime, solid, thin, fill=blue}}
\tikzset{curve/.style={very thick,semitransparent,color=red}}
\tikzset{calc/.style={thick, densely dashed}}
\tikzset{geodesic/.style={thick, densely dotted}}
\tikzset{nogeodesic/.style={thick, color=gray}}
\DeclareRobustCommand{\SkipTocEntry}[5]{}
\DeclareMathOperator{\rota}{rot}
\providecommand{\norm}[1]{\lVert#1\rVert}
\providecommand{\abs}[1]{\lvert#1\rvert}
\newcommand{\from}{\vcentcolon}
\newcommand{\ud}{\mathrm{d}}
\newcommand{\RR}{{\mathbb R}}
\newcommand{\CC}{{\mathbb C}}
\DeclareMathOperator{\ad}{ad}
\DeclareMathOperator{\Ad}{Ad}
\newcommand*\GL{\mathrm{GL}}
\newcommand*\id{\mathrm{id}}
\newcommand*\SO{\mathrm{SO}}
\newcommand*\OO{\mathrm{O}}
\newcommand*\SU{\mathrm{SU}}
\newcommand*\UU{\mathrm{U}}
\newcommand*\SE{\mathrm{SE}}
\newcommand{\lie}[1]{\mathfrak{#1}}
\DeclareMathOperator {\dexp}{dexp}
\newtheorem{problem}[theorem]{Problem}
\newcommand*\act{\cdot}
\newcommand*\wel{w}
\newcommand*\vel{v}
\newcommand*\bvec[1]{\boldsymbol{#1}}
\newcommand*\CP[1]{\CC P^{#1}}
\newcommand*\Sphere[1]{S^{#1}}
\newcommand*\Tan{T} 
\title%
{A Numerical Algorithm for \texorpdfstring{$C^2$}{C2}-splines \\ on Symmetric Spaces}
\author[1]{Klas Modin\thanks{\emaillink{klas.modin@chalmers.se}}}
\author[1,2]{Geir Bogfjellmo\thanks{\emaillink{geir.bogfjellmo@icmat.es}}}
\author[3]{Olivier Verdier\thanks{\emaillink{olivier.verdier@hvl.no}}}
\affil[1]{
  Mathematical Sciences, Chalmers and University of Gothenburg, Sweden
}
\affil[2]{Instituto de Ciencias Matemática, Madrid, Spain}
\affil[3]{
  Department of Computing, Mathematics and Physics, Western Norway University of Applied Sciences, Bergen, Norway
}
\date{\today}                                           
\begin{document}

\maketitle

\begin{abstract}
	Cubic spline interpolation on Euclidean space is a standard topic in numerical analysis, with countless applications in science and technology.
	In several emerging fields, for example computer vision and quantum control, there is a growing need for spline interpolation on curved, non-Euclidean space.
	The generalization of cubic splines to manifolds is not self-evident, with several distinct approaches.
	One possibility is to mimic the acceleration minimizing property, which leads to \emph{Riemannian cubics}.
	This, however, requires the solution of a coupled set of non-linear boundary value problems that cannot be integrated explicitly, even if formulae for geodesics are available.
	Another possibility is to mimic De~Casteljau's algorithm, which leads to \emph{generalized Bézier curves}.
	To construct $C^2$-splines from such curves is a complicated non-linear problem, until now lacking numerical methods.
	Here we provide an iterative algorithm for $C^2$-splines on Riemannian symmetric spaces, and we prove convergence of linear order.
	In terms of numerical tractability and computational efficiency, the new method surpasses those based on Riemannian cubics.
	Each iteration is parallel, thus suitable for multi-core implementation.
	We demonstrate the algorithm for three geometries of interest: the $n$-sphere, complex projective space, and the real Grassmannian.

  \textbf{MSC-2010:} 41A15, 65D07, 65D05, 53C35, 53B20, 14M17

  \textbf{Keywords:} De~Casteljau, cubic spline, Riemannian symmetric space, Bézier curve
\end{abstract}

\tableofcontents

\section{Introduction}

We address the following.
\begin{problem}\label{prob:interpolation_on_M}
	Given a set of points $\set{ p_i }_{i=0}^{N}$ on a connected manifold~$M$, construct a $C^2$ path $\gamma\colon [0,N]\to M$ such that $\gamma(i) = p_i$.
\end{problem}
For $M=\RR^n$ every textbook in numerical analysis teaches \emph{cubic splines}, i.e., piecewise polynomials of order 3 with matching first and second derivatives.
However, the generalization of cubic splines to curved space is non-trivial, essentially because polynomials are not well-defined on manifolds.
Interpolating paths on manifolds are, nevertheless, needed in a growing number of applications.
In the realization of quantum computers, for example, quantum control of qubits leads to an interpolation problem in complex projective space $\CC P^n$ (see~\autoref{sec:quantumexample}).
In computer vision, as another example, the recognition of a point cloud up to affine transformations is naturally identified with an element in the Grassmannian $\mathrm{\mathrm{Gr}}(k,n)$ (see~\autoref{sec:visionexample}).

If $M$ is a Riemannian manifold, a natural generalization of cubic splines are \emph{Riemannian cubics} \cite{NoHePa89, CrSi91}.
They are based on minimizing acceleration under interpolation constraints, by solving the problem
\begin{equation}
	\min_{\gamma(i)=p_i} \int_0^N \abs{\nabla_{\dot\gamma(t)}\dot\gamma(t)}^2\, \ud t.
\end{equation}
The corresponding Euler--Lagrange equation is a fourth order ODE on $M$ with complicated, coupled boundary conditions \cite{NoHePa89}.
In addition to the Euclidean case, explicit solutions to the initial-value problem are known for so called \emph{null Lie quadratics} on $\SO(3)$ and $\SO(2,1)$ equipped with the bi-invariant Riemannian metric \cite{No2006}.
For other cases, even when geodesics are explicitly known, one is left with numerical ODE methods, combined, for example, with a shooting method to match the boundary conditions.
This approach is computationally costly and the associated convergence analysis is involved.
Another possibility is to only approximatelly fulfill the interpolation constraints by incorporating them in the minimization functional and then use a steepest-descent method on the space of curves~\cite{SaAbSrKl2011}.
This approach is also costly.

If $M$ is a manifold for which the geodesic boundary problem can be solved explicitly, there is a natural concept of \emph{generalized Bézier curves} (GBC).
They are based on a direct generalization of De~Casteljau's algorithm~\cite{PaRa1995}.
Splines can then be constructed by gluing piecewise GBC with matching boundary conditions.
In this way, a fully explicit algorithm for $C^1$-splines on Grassmannian and Stiefel manifolds has been developed~\cite{KrMaLeBa2017}.
The $C^2$~condition, however, first derived by \citet{PoNo2007}, is significantly more complicated than the $C^1$~condition.
For this reason, there is a lack of numerical algorithms for $C^2$-splines.
Nevertheless, $C^2$ continuity is often required in applications, to avoid discontinuities in accelerations.

In this paper we give the first numerical algorithm for $C^2$-splines based on GBC.
Instead of treating all Riemannian manifolds, we focus on the subclass of \emph{symmetric spaces} (see \autoref{sub:symmetric_spaces}).
Such manifolds are of high interest in applications, and include, e.g., spheres, hyperbolic space, real and complex projective spaces, and Grassmannian manifolds (see \autoref{tbl:sym_spaces}).
In many cases, the geodesic boundary value problem on a symmetric space has an explicit solution (a prerequisite for De~Casteljau's algorithm).
The original $C^2$~condition of Popiel and Noakes is, however, still complicated.
We now list the specific contributions of our paper. 

\begin{enumerate}
	\item Using the special homogeneous space structure of symmetric spaces, we give a significant simplification of the $C^2$~condition (\autoref{thm:main_result_C2}).
	\item Using the new $C^2$~condition, we construct an algorithm based on fixed-point iterations (\autoref{alg:mainalgorithm}).
	\item We prove convergence of the algorithm under conditions on the maximal distance between consecutive interpolation points (\autoref{thm:main_result_convergence}).
\end{enumerate}



The paper is organized as follows.
In \autoref{sec:gen_bez} we first give a brief description of Riemannian symmetric spaces and thereafter describe the construction of generalized Bézier curves.
We also formulate the $C^2$~condition.
In \autoref{sec:num_alg} we give the numerical algorithm and we formulate the convergence result.
Numerical examples are given in \autoref{sec:examples}.
An outlook towards future work is given in \autoref{sec:outlook}.
The proofs of the $C^2$~condition and the convergence result are long and technical, and therefore given in \autoref{sec: lemmata}-\ref{sec: conv}.
In \autoref{sec:python} we give a brief demonstration of an easy-to-use open-source \texttt{Python} package implementing our interpolation algorithm for several geometries.



\section{Generalized Bézier curves on symmetric spaces}\label{sec:gen_bez}


Let us describe the construction of generalized Bézier curves on a symmetric space.
The construction relies on a notion of \emph{interpolation} between two points on the manifold, generalizing the notion of a \emph{geodesic} on a Riemannian manifold~\cite{Lee97}.


\begin{table}
  \small
  \begin{tabular}{rlllc}
    \toprule
     & $M$ & $G$ & $H$ & Implementation \\
    \midrule
    Euclidean space & $\RR^n$ & $\SE(n)$ & $\SO(n)$ & \texttt{Flat} \\
    $n$-sphere & $S^n$ & $\OO(n+1)$ & $\OO(n)$ & \texttt{Sphere} \\
    Hyperbolic space & $H^n$ & $\OO(n,1)$ & $\OO(n)$ & \texttt{Hyperbolic} \\
    Real Grassmannian & $\mathrm{\mathrm{Gr}}(k,n)$ & $\OO(n)$ & $\OO(k)\times \OO(n-k)$ & \texttt{Grassmannian} \\
    Orthogonal group & $\OO(n)$ & $\OO(n)\times\OO(n)$ & $\OO(n)$ & \\
    Complex projective space & $\CC P^n$ & $\UU(n+1)$ & $\UU(n)\times \UU(1)$ & \texttt{Projective} \\
    Complex Grassmannian & $\mathrm{\mathrm{Gr}}_\CC(k,n)$ & $\UU(n)$ & $\UU(k)\times \UU(n-k)$ & \\
    Unitary group & $\UU(n)$ & $\UU(n)\times\UU(n)$ & $\UU(n)$ & \\
    Co-variance matrices & $P(n)$ & $\GL(n)$ & $\OO(n)$ & \\
    \bottomrule
  \end{tabular}
  \caption{Some examples of Riemannian symmetric spaces.
    For the details of the corresponding symmetric space decomposition, see~\cite[\S\,5]{MKVe16}.
    In the right-most column, we give the geometry class to use when interpolating with our \texttt{Python} package; we refer to \autoref{sec:python} for example usage.
  }\label{tbl:sym_spaces}
\end{table}

\subsection{Symmetric spaces}
\label{sub:symmetric_spaces}
In this section we briefly recall symmetric spaces.
A list of the most common examples of symmetric spaces is given in \autoref{tbl:sym_spaces}.
For details, we refer to~\cite{Sh97} or~\cite{He62}.
We assume that a Lie group $G$ acts transitively on $M$, making $M$ a \emph{homogeneous space}~\cite[\S\,4]{Sh97}.
We denote the \emph{action} by
\begin{equation}
  G\times M \ni (g, p) \mapsto g \act p \in M
  .
\end{equation}
For every point $p\in M$, we define the \emph{isotropy subgroup} $H_p$ at $p$
\[
  H_p \coloneqq \setc{g\in G}{g\act p = p}
  .
\]

Now, the homogeneous space $M$ is called a \emph{symmetric space} if the Lie algebra $\lie{g}$ of $G$ has a direct sum decomposition
\begin{equation}
  \label{eq:reddec}
  \lie{g} = \lie{h}_p \oplus \lie{m}_p
  ,
\end{equation}
where $\lie{h}_p$ is the Lie algebra of $H_p$, and the summands fulfill the following algebraic conditions
\begin{equation}
[\lie{h}_p,\lie{h}_p] \subset \lie{h}_p, \qquad [\lie{h}_p, \lie{m}_p] \subset \lie{m}_p, \qquad [\lie{m}_p, \lie{m}_p]\subset \lie{h}_p .
\label{eq: algcond}
\end{equation}
If these conditions are satisfied at one point $p\in M$, then they are automatically valid at every point with
\[
\lie{h}_{g\act p} = g \act \lie{h}_p, \qquad \lie{m}_{g\act p} = g\act \lie{m}_p ,
\]
where we follow the convention formulated in \autoref{sec: lemmata} that $g\act$ denotes the canonical action on the space at hand (here, the adjoint action of a Lie group on its Lie algebra).

By differentiating the action map $(g, p) \mapsto g\cdot p$ with respect to the arguments,
we also obtain, on the one hand, an \emph{infinitesimal action} of $\lie{g}$ on $M$,
and, on the other hand, a \emph{prolonged action} of $G$ on $\Tan M$.
For $\xi \in \lie{g}$ and $p \in M$, we denote the infinitesimal action by
\begin{equation}\label{eq:inf_action}
  (\xi,p)\mapsto\xi \act p \in \Tan_p M
  .
\end{equation}
For a fixed $p$, this map induces an isomorphism between $\lie{m}_p$ and $T_pM$.

\subsection{Interpolating Curves}

We now define the \emph{interpolating curve} $\eta\colon [0,1] \to M$ between two (close enough) points $p_0$ and $p_1$ as the unique curve satisfying
\begin{equation}
\label{eq:interp_curve}
  \eta(t) = \exp(t\xi)\cdot p_0
  \qquad
  \eta(1) = p_1
  \qquad \xi\in \lie{m}_{p_0}
  .
\end{equation}
Let us introduce the notation
\[
  [p_0,p_1]_t \coloneqq \eta(t)
  .
\]
In particular, notice that
\begin{align}
  [p_0, p_1]_0 &= p_0
                 ,
  \qquad
  [p_0, p_1]_1 = p_1
                 .
\end{align}

In many (but not all) cases, a symmetric space $M$ can be equipped with the structure of a Riemannian manifold, given through an $H_p$-invariant inner product $\langle \cdot,\cdot\rangle_p$ on $\lie{m}_p$.
In this case, the interpolating curves are geodesics.
From here on, we shall assume that $M$ is such a \emph{Riemannian symmetric space}.

\subsection{De~Casteljau's construction}\label{sub:casteljau}

\newcommand*\placepoints{

  \node (p0) at (-4, -2) [interpolation point] {};
  \node at (p0) [left=.2] {$p_i$};
  \node (p1) at (4,-2) [interpolation point] {};
  \node at (p1) [right=.2] {$p_{i+1}$};
  \node (q0) at (-4.5,1.2) [control point, label={$q_{i}^+$}] {};
  \node (q1) at (2,1) [control point, label={$q_{i+1}^-$}] {};
}

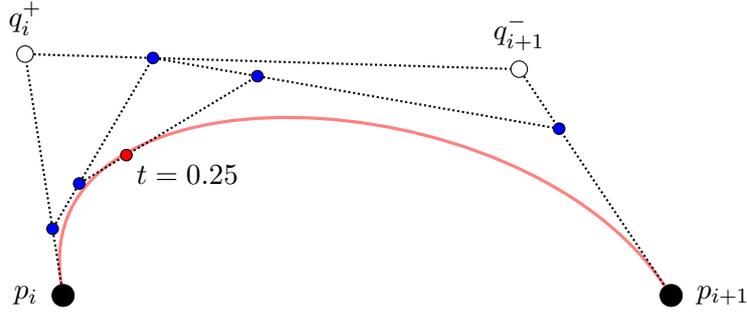
\begin{figure}
  \centering
\begin{tikzpicture}

  \placepoints

  \draw[curve] (p0)..controls (q0) and (q1) .. (p1);

  \draw[geodesic] (p0) -- (q0) node[intnode] (p0q0) {};
  \draw[geodesic] (q0) -- (q1) node[intnode] (q0q1) {};
  \draw[geodesic] (q1) -- (p1) node[intnode] (q1p1) {};

  \draw[geodesic] (p0q0) -- (q0q1) node[intnode] (p0q1) {};
  \draw[geodesic] (q0q1) -- (q1p1) node[intnode] (q0p1) {};

  \draw[geodesic] (p0q1) -- (q0p1) node[intnode, fill=red] (p0p1) {};

  \node at (p0p1) [anchor=north west] {$t = \decasteljautime$};

\end{tikzpicture}
\caption{
  An illustration of the construction of a cubic Bézier curve using De~Casteljau's algorithm
  at $t=0.25$.
  The algorithm proceeds iteratively by choosing points at position $0.25$ on each geodesic as indicated in the figure.
  The precise definition of the resulting curve is given in \eqref{eq:cubicdecasteljau}.
  }
\label{fig:decasteljau}
\end{figure}

We briefly describe the De~Casteljau construction, illustrated on \autoref{fig:decasteljau}.
This algorithm constructs Bézier curves from interpolating curves.
We focus on \emph{cubic} Bézier curves.
From four points \(p_0\), \(q_0\), \(q_1\) and \(p_1\), one defines the cubic Bézier curve $\gamma \colon [0,1] \to M$ by
\begin{equation}
\label{eq:cubicdecasteljau}
  \gamma(t) \coloneqq \bracket[\Big]{\bracket[\big]{\bracket{p_0,q_0}_t, \bracket{q_0,q_1}_t}_t, \bracket[\big]{\bracket{q_0,q_1}_t, \bracket{q_0,p_1}_t}_t}_t
  .
\end{equation}
Note that, by construction, $\gamma(0) = p_0$ and $\gamma(1) = p_1$.

\subsection{Exponential and logarithm on symmetric spaces}
\label{sec:explog}

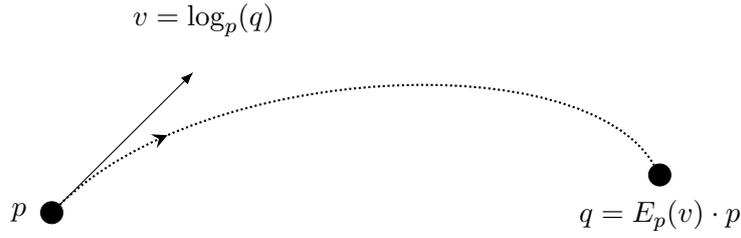
\begin{figure}
  \centering
\begin{tikzpicture}

  \node (p0) at (-4, 0) [interpolation point] {};
  \node at (p0) [left=.2] {$p$};
  \node (q0) at (4,.5) [interpolation point] {};
  \node at (q0) [below=.2] {$q = E_p(v) \act p$};
  \node (v) at (-2,2) {};
  \draw[-Latex] (p0) -- (v);

 \node at (v) [above=.2] {$v = \log_p(q)$};

  \begin{scope}[
    decoration={
      markings,
      mark=at position 0.2 with {\arrow[line width=.5mm]{>}}
    },
    ]
    \draw[postaction={decorate}, geodesic] (p0) .. controls (v) and +(120:2cm) .. (q0)
     {};

  \end{scope}

\end{tikzpicture}
\caption{
  Illustration of the movement function $E_p$ and of the logarithm described in \autoref{sec:explog}.
  The curve between $p$ and $q$ is a geodesic.
}
\label{fig:explog}
\end{figure}

For the $C^2$ condition in \autoref{sec:c2condition}
and implementation of the algorithm
we need the following functions (see \autoref{fig:explog}).
\begin{enumerate}
\item
  The action of a group element $g \in G$ on a point $x\in M$:
  \begin{equation}\label{eq:action_map}
    g \act x \in M .
  \end{equation}
\item
  A movement function $E$,
  which,
  to a point $p$ and a velocity $v$ at $p$,
  assigns an element $E_p(v) \in G$.
  This function should generate interpolating curves \eqref{eq:interp_curve}, i.e., it should be of the form
  \begin{equation}
    \label{eqdef:movement}
    E_p(\xi \cdot p) = \exp(\xi) \qquad \xi \in \lie{m}_p
    .
  \end{equation}
\item
The \emph{logarithm}; given two points $p$ and $q$ in $M$, it is defined by
\begin{equation}
  \label{eqdef:log}
  v \in \Tan_p M, \qquad E_p(v)\act p = q, \qquad \iff \qquad v = \log_p(q)
  .
\end{equation}
Throughout the paper we make the blanket assumption that $p$ and $q$ are never past conjugate points, so that the logarithm is always well-defined.
\end{enumerate}

\subsection{Cubic splines and \texorpdfstring{$C^2$}{C2} condition}\label{sec:c2condition}

We shall now consider curves consisting of piecewise cubic Bézier curves.
As we have seen in \autoref{sub:casteljau}, each cubic Bézier curve is determined by two interpolating points and two control points.
To construct a curve of piecewise Bézier curves we therefore need to impose conditions on the control points (in addition to the interpolating conditions).
On Euclidean space, the standard approach is to use the conditions for $C^2$ continuity at the interpolating points, which leads to a linear set of equations.
On a general Riemannian manifold, the corresponding $C^2$ condition, given by \citet{PoNo2007}, is highly nonlinear, involving the inverse of the derivative of the Riemannian exponential.
In the special case of symmetric spaces, however, we now show that the $C^2$ condition simplifies significantly, involving only the three operations \eqref{eq:action_map}, \eqref{eqdef:movement}, and \eqref{eqdef:log}.

\begin{definition}
\label{def:piecewise_cubic_bezier}
  Let $\bvec{p}=(p_0,\ldots,p_N)$ be points on a symmetric space $M$.
  A \emph{composite cubic Bézier curve} on $M$ is a curve $\gamma\colon[0,N]\to M$ such that
  for each integer $0\leq i \leq N$, $\gamma(i) = p_i$,
  and such that for each integer $0 \leq i < N$,
  the restriction $\gamma_i \coloneqq \gamma|_{[i,i+1]}$ is a cubic Bézier curve (as defined in \autoref{sub:casteljau}).
  To each such piece $\gamma_i$, we associate two \emph{control points}, denoted $q_i^+$ and $q_{i+1}^-$.
  A $C^2$-continuous composite cubic Bézier curve is called a \emph{cubic spline}.
\end{definition}

The interpolating conditions ensure that $\gamma$ is continuous (or $C^0$).
However, for higher degree of continuity ($C^1$ or $C^2$), we need additional conditions on the control points. 
We now give such conditions.



\newcommand*\placepointstwo{

  \node (p0) at (-6, -3) [interpolation point] {};
  \node at (p0) [left=.2] {$p_{i-1}$};

  \node (q0p) at (-6.5,-0.25) [control point, label={$q_{i-1}^+$}] {};

  \node (p1) at (0,0) [interpolation point] {};
  \node at (p1) [above=.1] {$p_{i}$};

  \node (q2m) at (4,-3) [control point] {};
  \node at (q2m) [below=.1] {$q_{i+1}^-$};

  \node (p2) at (6,-3) [interpolation point] {};
  \node at (p2) [right=.2] {$p_{i+1}$};

  \coordinate (delta) at ($(q2m)-(q0p)$);
  \coordinate (xi) at ($(0,0)!0.25!(delta)$);
  \coordinate (q1m) at ($(p1)-(xi)$);
  \coordinate (q1p) at ($(p1)+(xi)$);

  \node at (q1m) [control point, label={$q_{i}^-$}] {};
  \node at (q1p) [control point, label={$q_{i}^+$}] {};

  \node (q0ptrans) at ($(q0p) - (q1m)+(p1)$) [calc point] {};
  \node at (q0ptrans) [below=.1] {$\exp(\xi_i)\cdot q_{i-1}^+$};

  \node (q2mtrans) at ($(q2m) - (q1p)+(p1)$) [calc point] {};
  \node at (q2mtrans) [below=.1] {$\exp(-\xi_i)\cdot q_{i+1}^-$};

}

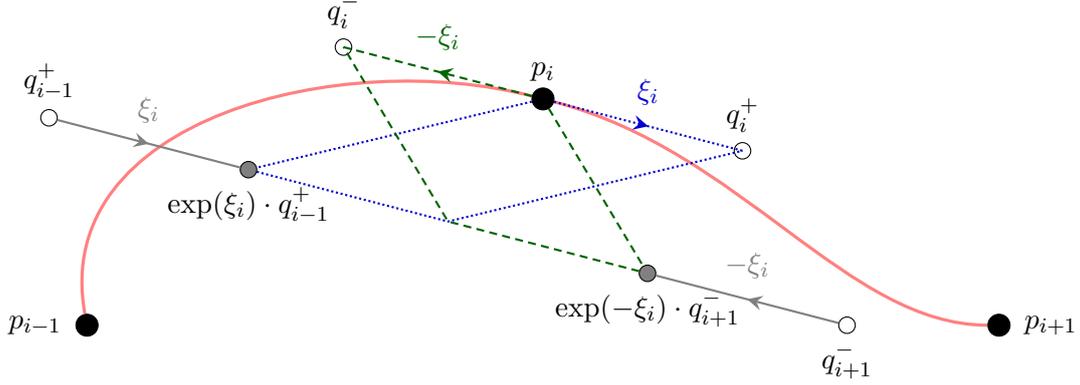
\begin{figure}
  \centering
\begin{tikzpicture}

  \placepointstwo

  \draw[curve] (p0)..controls (q0p) and (q1m) .. (p1) ..controls (q1p) and (q2m) .. (p2);

  \begin{scope}[
    decoration={
      markings,
      mark=at position 0.5 with {\arrow[line width=.5mm]{>}}
    },
    ]
    \draw[geodesic, densely dotted, color=MediumBlue, postaction={decorate}] (p1) -- (q1p)
    node[midway, label={above:$\xi_{i}$}] (v0) {};
    \draw[geodesic, densely dashed, color=DarkGreen, postaction={decorate}] (p1) -- (q1m)
    node[midway, label={above:$-\xi_{i}$}] (v1) {};
    \draw[geodesic, densely dotted, color=MediumBlue] (p1) -- (q0ptrans);
    \draw[geodesic, densely dashed, color=DarkGreen] (p1) -- (q2mtrans);
    \draw[calc, densely dotted, color=MediumBlue] (q0ptrans) -- ($(q0ptrans)+(xi)$) -- (q1p);
    \draw[calc, densely dashed, color=DarkGreen] (q2mtrans) -- ($(q2mtrans)-(xi)$) -- (q1m);
    \draw[nogeodesic, postaction={decorate}] (q0p) -- (q0ptrans)
    node[midway, label={above:$\xi_{i}$}] (v2) {};
    \draw[nogeodesic, postaction={decorate}] (q2m) -- (q2mtrans)
    node[midway, label={above:$-\xi_{i}$}] (v3) {};

  \end{scope}






\end{tikzpicture}
\caption{
  An illustration of the $C^2$~condition in \autoref{thm:main_result_C2}.
  The sum of the two vectors at $p_i$ generating the dashed green geodesics should equal the sum of the two vectors at $p_i$ generating the dotted blue geodesics.
  Notice that the action of $\exp(\xi_i)$ on $q^+_{i-1}$ and $q^-_{i+1}$ does not generate geodesics from these points, since $\xi_i$ does not belong to $\lie{m}_{q^+_{i-1}}$ or $\lie{m}_{q^-_{i+1}}$.
  For this reason, the condition \eqref{eq:c2_condition} in \autoref{thm:main_result_C2} does not work for arbitrary Riemannian manifolds.
  }
\label{fig:c2condition}
\end{figure}

\begin{theorem}\label{thm:main_result_C2}
Let $M$ be a Riemannian symmetric space and let $\gamma\colon [0,N]\to M$ be a composite cubic Bézier curve according to \autoref{def:piecewise_cubic_bezier}.
Consider the conditions on the control points given by
\begin{align}
  \log_{p_i}(q_{i}^+) &= -\log_{p_i}(q_{i}^-) \eqqcolon v_i \label{eq:c1_condition} \\
  \log_{p_i}(E_{p_i}(-\vel_i)\cdot q_{i+1}^-) - \log_{p_i}(q_{i}^+) &=  \log_{p_i}(E_{p_i}(\vel_i)\cdot q_{i-1}^+) -\log_{p_i}(q_{i}^-)
                                                                      .
  \label{eq:c2_condition}
\end{align}
\begin{enumerate}[label=\textup{(\roman*)}]
  \item If condition \eqref{eq:c1_condition} is fulfilled, then $\gamma$ is a $C^1$-curve.
  \item If conditions \eqref{eq:c1_condition} and \eqref{eq:c2_condition} are fulfilled, then $\gamma$ is a $C^2$-curve, i.e., a cubic spline.
\end{enumerate}
Furthermore, if $\gamma$ fulfills the $C^2$~conditions \eqref{eq:c1_condition} and \eqref{eq:c2_condition}, then it is uniquely determined either by
clamped boundary conditions, where $\gamma'(0)$ and $\gamma'(N)$ are prescribed,
or by natural boundary conditions, where $\nabla_{\gamma'(0)}\gamma'(0) = \nabla_{\gamma'(0)}\gamma'(0) = 0$.
\end{theorem}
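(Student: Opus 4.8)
The plan is to reduce the three assertions to a single local computation: the one-sided velocity and one-sided covariant acceleration of one cubic Bézier piece at an endpoint, expressed through $\log$, $E$ and the $G$-action, and then to match these quantities across an interior node $p_i$. Two sets of facts (the ``lemmata'' of \autoref{sec: lemmata}) are needed. Geometric: as $M$ is a Riemannian symmetric space, $G$ acts by isometries, the Levi--Civita connection is $G$-invariant, parallel transport along an interpolating curve $[p,q]_t=\exp(t\xi)\cdot p$ with $\xi\in\lie{m}_p$ is the prolonged action $w\mapsto\exp(t\xi)\cdot w$, and any isometry $g$ intertwines the logarithm, $g\cdot\log_p(q)=\log_{g\cdot p}(g\cdot q)$. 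Analytic: writing $I(a,b,u)\coloneqq[a,b]_u$, at $u=0$ one has $\partial_a I=\id$, $\partial_b I=0$, $\partial_u I(a,b,0)=\log_a(b)$, while the mixed jet $\partial_b\partial_u I|_{u=0}$ is the differential $D\log_a$ and the remaining second-order jet is controlled by the symmetric-space curvature identity $R(X,Y)Z=-[[X,Y],Z]$ (up to the normalization of the metric).

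Part (i). Differentiating the De~Casteljau formula \eqref{eq:cubicdecasteljau} once at $t=0$ and inducting over the three De~Casteljau levels with the $u=0$ jets above yields $\gamma_i'(i^+)=3\log_{p_i}(q_i^+)$. Applying the reversal $t\mapsto1-t$ (which turns $\gamma_{i-1}$ into the cubic Bézier curve based at $p_i$ with control points $q_i^-$, $q_{i-1}^+$) gives $\gamma_{i-1}'(i^-)=-3\log_{p_i}(q_i^-)$. Equality of the two one-sided velocities at $p_i$ is precisely \eqref{eq:c1_condition}, so $\gamma\in C^1$; this step uses no symmetric structure.

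Part (ii). Differentiating the first-order expansion a second time covariantly, $\nabla_{\gamma_i'}\gamma_i'|_{t=0}$ acquires, besides the ``flat'' contribution, the term $D\log_{p_i}$ evaluated at $q_i^+$ applied to the second-level velocity $2\log_{q_i^+}(q_{i+1}^-)$ — exactly the inverse exponential derivative that renders the general Popiel--Noakes condition intractable. On a symmetric space, however, this derivative together with the curvature terms thrown off by the other two De~Casteljau levels reorganizes: in place of parallel transport of $\log_{q_i^+}(q_{i+1}^-)$ to $T_{p_i}M$ one obtains its transport by the \emph{isometry} $E_{p_i}(-v_i)$, and the computation collapses to $\gamma_i''(i^+)=6\bigl[\log_{p_i}(E_{p_i}(-v_i)\cdot q_{i+1}^-)-\log_{p_i}(q_i^+)\bigr]$, where $v_i=\log_{p_i}(q_i^+)$. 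The reversal symmetry plus \eqref{eq:c1_condition} (so that $\log_{p_i}(q_i^-)=-v_i$ and the relevant isometry is $E_{p_i}(v_i)$) gives $\gamma_{i-1}''(i^-)=6\bigl[\log_{p_i}(E_{p_i}(v_i)\cdot q_{i-1}^+)-\log_{p_i}(q_i^-)\bigr]$. Hence, given $C^1$, equality of the one-sided covariant accelerations at $p_i$ is equivalent to \eqref{eq:c2_condition}. Finally, since $\gamma$ is already $C^1$ the Christoffel term $\Gamma(\gamma',\gamma')$ is continuous at $p_i$, so matching covariant accelerations forces $\ddot\gamma$ to have matching one-sided limits there, and a mean-value argument upgrades $\gamma$ to $C^2$. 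I expect the reorganization of the curvature bookkeeping to be the main obstacle — it is precisely here that the symmetric structure is used, and precisely why the ``isometry transport'' rather than parallel transport appears, as \autoref{fig:c2condition} indicates.

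The boundary-value claim. After imposing \eqref{eq:c1_condition} the free unknowns are the vectors $v_i=\log_{p_i}(q_i^+)\in T_{p_i}M$ for $1\le i\le N-1$, together with the two boundary control vectors $\log_{p_0}(q_0^+)$ and $\log_{p_N}(q_N^-)$; since $q_{i-1}^+$, $q_i^\pm$ and $q_{i+1}^-$ are all determined by $v_{i-1}$, $v_i$, $v_{i+1}$, each instance of \eqref{eq:c2_condition} couples only those three, so the system is block-tridiagonal. Clamped conditions fix the two boundary control vectors directly through $\gamma'(0)=3\log_{p_0}(q_0^+)$ and $\gamma'(N)=-3\log_{p_N}(q_N^-)$; the natural conditions $\nabla_{\gamma'(0)}\gamma'(0)=\nabla_{\gamma'(N)}\gamma'(N)=0$ instead supply the two missing end equations via the endpoint-acceleration formula of part (ii). Either way the count balances, and in the limit of coalescing interpolation points the system is exactly the classical strictly diagonally dominant tridiagonal cubic-spline system; unique solvability then follows from a contraction/diagonal-dominance estimate valid when consecutive $p_i$ are close enough — the same regime and estimates underlying \autoref{thm:main_result_convergence} — which I take to be the intended local sense of ``uniquely determined'' here.
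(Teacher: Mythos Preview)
Your strategy differs substantially from the paper's. You propose to compute the one-sided covariant acceleration of a cubic Bézier segment directly from the De~Casteljau construction, tracking curvature contributions through the three nesting levels until they ``reorganize'' into the formula $\gamma_i''(i^+)=6\bigl[\log_{p_i}(E_{p_i}(-v_i)\cdot q_{i+1}^-)-\log_{p_i}(q_i^+)\bigr]$. The paper instead takes the Popiel--Noakes $C^2$~condition as a black box,
\[
\Tan_{q_i^+} I_{p_i}\bigl(\log_{q_i^+} q_{i+1}^-\bigr) \;=\; -\log_{q_i^-}(q_{i-1}^+) - 2\log_{q_i^-}(p_i),
\]
and then simplifies it via two short symmetric-space lemmas: (a) the derivative of the geodesic symmetry obeys $\Tan_{\exp(\xi)\cdot p}I_p\bigl(\exp(\xi)\cdot w\bigr)=-\exp(-\xi)\cdot w$ for $\xi\in\lie m_p$ (\autoref{prop:involution}), and (b) $G$-equivariance of $\log$ (\autoref{prop:logequi}). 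With these two facts the Popiel--Noakes identity rewrites to \eqref{eq:c2_condition} in a few lines; no curvature bookkeeping ever appears, because it is already absorbed into $\Tan I_p$, and (a) disposes of that derivative in one stroke.

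Your route would in effect re-derive the Popiel--Noakes result along the way, and the step you yourself flag as ``the main obstacle'' --- collapsing the $D\log$ term and the curvature terms thrown off by the nested De~Casteljau levels --- is precisely the content of their theorem together with identity~(a). So the gap is real: you have not supplied the mechanism by which those contributions cancel, and on a symmetric space that mechanism \emph{is} \autoref{prop:involution} (equivalently the evenness $\pi_p\dexp_\xi=\pi_p\dexp_{-\xi}$ on $\lie m_p$, \autoref{prop:adeven}). If you want a self-contained argument that bypasses citing Popiel--Noakes, the clean fix is to isolate and prove (a) first; once you know that the symmetry at $p_i$ acts on tangent vectors by $\exp(\xi)\cdot w\mapsto -\exp(-\xi)\cdot w$, your claimed acceleration formula drops out immediately and the rest of your Part~(ii) goes through. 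Without that lemma, ``reorganizes'' is a hope rather than an argument.

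Your Part~(i) is correct and standard, and your reading of the uniqueness clause --- that it is meant locally, in the regime where the tridiagonal system is a contraction, i.e.\ exactly the hypotheses of \autoref{thm:main_result_convergence} --- matches how the paper handles it.
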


\begin{remark}
  Notice how \eqref{eq:c1_condition} and \eqref{eq:c2_condition} are generalizations of the first and second order finite differences in Euclidean geometry.
  Indeed, in this case condition \eqref{eq:c1_condition} reads
  \begin{equation}
    q_i^+ - p_i = -(q_i^--p_i)
  \end{equation}
  and condition \eqref{eq:c2_condition} reads
  \begin{equation}
    q_{i-i}^+ - 2 q_i^- + p_i = q_{i+1}^- - 2 q_i^+ + p_i .
  \end{equation}
  See \autoref{fig:c2condition} for a geometric illustration of the $C^2$~condition \eqref{eq:c2_condition}.
\end{remark}

\begin{proof}[Outline of proof]
  The details of the proof are rather technical and therefore given in \autoref{sec:C2reg}.
  However, for convenience we also provide an outline here:
  \begin{enumerate}[label=\textup{(\roman*)}]
    \item
     The $C^2$ condition in the case of Riemannian symmetric spaces is given by \citet[Sec.~4]{PoNo2007}, expressed in terms of the derivative of the \emph{symmetry function} $I_p(q) = E_p(-\log_p(q))$.
    \item
     Using the special symmetric space structure, we show that the derivative of $I_p$ can be expressed solely in terms of the movement function $E_p$ (\autoref{prop:involution}).
    \item 
     Together with a result on equivariance of the $\log$ function (\autoref{prop:logequi}) we can then simplify the original Riemannian symmetric space $C^2$ condition by \citet{PoNo2007} to the one given by \eqref{eq:c2_condition}.
  \end{enumerate}
\end{proof}

\section{Numerical algorithm}\label{sec:num_alg}
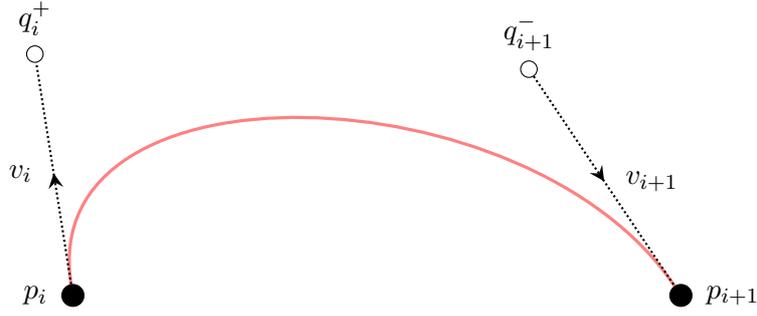
\begin{figure}
  \centering
\begin{tikzpicture}

  \placepoints

  \draw[curve] (p0)..controls (q0) and (q1) .. (p1);

  \begin{scope}[
    decoration={
      markings,
      mark=at position 0.5 with {\arrow[line width=.5mm]{>}}
    },
    ]
    \draw[geodesic, postaction={decorate}] (p0) -- (q0)
    node[midway, label={left:$v_{i}$}] (v0) {};
    \draw[geodesic, postaction={decorate}] (q1) -- (p1)
    node[midway, label={right:$v_{i+1}$}] (v1) {};

  \end{scope}

\end{tikzpicture}
\caption{
  An illustration of the control points and the corresponding velocities.
  We define a velocity $v_i$ at every point $p_i$.
  From that velocity, we construct the control point $q_{i}^+ \coloneqq E_{p_i}(v_{i}) \act p_i$.
  Similarly, we use $v_{i+1}$ to construct the point $q_{i+1}^{-} \coloneqq E_{p_{i+1}}(-v_{i+1}) \act p_{i+1}$.
  We can then construct a cubic Bézier curve from the four points $p_i$, $q_{i}^+$, $q_{i+1}^-$, $p_{i+1}$ as indicated in \autoref{fig:decasteljau}.
  If the velocities $v_i$ are chosen following \autoref{thm:main_result_C2}, the resulting piecewise cubic curve will have optimal regularity.
}
\label{fig:controlpoints}
\end{figure}

We shall now construct a fixed point iteration algorithm for cubic splines on symmetric spaces, based on the $C^2$ condition in \autoref{thm:main_result_C2}.
To this end, we parameterize the control points $q_i^-$ and $q_i^+$ by the corresponding tangent vectors $v_i$ as defined in \autoref{thm:main_result_C2}.
Next, let
\begin{equation}
  \begin{split}
    &\wel_i(\vel_{i-1},\vel_i,\vel_{i+1}) \coloneqq \\
    &\qquad \log_{p_i}\paren[\big]{E_{p_i}(-\vel_i) E_{p_{i+1}}(-\vel_{i+1})\cdot p_{i+1}} \\
    &\qquad -\log_{p_i}\paren[\big]{E_{p_i}(\vel_i) E_{p_{i-1}}(\vel_{i-1})\cdot p_{i-1}}
    .
\end{split}
\label{eq:weldef}
\end{equation}
The iteration map over $\bvec{\vel} \coloneqq (\vel_1,\dotsc,\vel_{N-1})$ is then given by
\begin{equation}
  \bvec{\vel} \mapsto \frac{1}{4}{\bvec{\wel}\paren{\bvec{\vel}}} + \frac{1}{2}\bvec{\vel}
  ,
\label{eq: fixedpointiter}
\end{equation}
where
\begin{equation}
\bvec{\wel}(\bvec{\vel}) \coloneqq \paren[\big]{\wel_1(\vel_0,\vel_1,\vel_2),\dotsc,\wel_{N-1}(\vel_{N-2},\vel_{N-1},\vel_N)}
.
\end{equation}
Pseudo code for the complete algorithm is given in \autoref{alg:mainalgorithm}.
We consider two options for boundary conditions.
\begin{enumerate}[label=\upshape(\roman*)]
\item Clamped spline:
\begin{equation}
\vel_0(\vel)= \vel_{\text{start}}, \qquad \vel_N(\vel)= \vel_{\text{end}}
\label{eq:bndclamp}
\end{equation}
 where $\vel_{\text{start}}$ and $\vel_{\text{end}}$ are prescribed constants.
\item Natural spline:
\begin{equation}
\vel_0(\vel) = \frac{1}{2} \log_{p_0}(q_1^-), \qquad \vel_N(\vel) = -\frac{1}{2}\log_{p_N}(q_{N-1}^+)
.
\label{eq:bndnat}
\end{equation}
\end{enumerate}

\begin{algorithm}
  \caption{Computing the control points $q_i^+$ and $q_i^-$}
  \label{alg:mainalgorithm}
The auxiliary functions $E$ and $\log$ are defined in \eqref{eqdef:movement} and \eqref{eqdef:log}.
  \begin{algorithmic}
    \State $\bvec{\vel} \gets (0,\dotsc,0)$ 
    \Repeat
    \State $\bar{\bvec{\vel}} \gets (\vel_0(\vel),\bvec{\vel},\vel_N(\vel))$ \Comment{$\vel_0$ and $\vel_N$ are determined by \eqref{eq:bndclamp} or \eqref{eq:bndnat}}
    \For{$i \gets 0,\dotsc,N-1$}
    \State $g_i^+ \gets E_{p_i}(\bar{\vel}_i)$
    \State $q_i^+ \gets g_i^+ \act p_i$
    \EndFor
    \For{$i \gets 1,\dotsc,N$}
    \State $g_i^- \gets E_{p_i}(-\bar{\vel}_i)$
    \State $q_{i}^- \gets g_i^- \act p_{i}$
    \EndFor
    \For {$i \gets 1, \dotsc, N-1$}
    \State $\delta_i \gets {\log_{p_i} (g_i^- \act q_{i+1}^-) -  \log_{p_i} (g_i^+ \act q_{i-1}^+)} - {2}\vel_i$
    \State $\vel_i \gets \vel_i + \frac{1}{4}\delta_i$
    \EndFor
    \Until{$\norm{\delta} \leq TOL$}
  \end{algorithmic}
    The outcome is the control points $q_i^+$ and $q_i^-$, from which one can compute the spline segments.
\end{algorithm}


\subsection{Convergence result}
\label{sec:convresult}

In this section we give a result on convergence of the fixed point method given by \autoref{alg:mainalgorithm}.
To do so, we first give some preliminary definition.

The Riemannian distance between $p,q\in M$ is denoted by
$
  d(p,q)
$.
Our proof of convergence uses that two consecutive points $p_i,p_{i+1}$ are close.
Thus, we define the constant
\begin{equation}
\label{eq:maxdistance}
  D = \max_{0\leq i \leq N-1} d(p_i,p_{i+1}).
\end{equation}
The other constant that comes into play is a bound on (essentially) the curvature of $M$.
Indeed, if $\lVert\cdot\rVert_p$ denotes the $H_p$-invariant norm on $\lie{m}_p$ associated with the Riemannian structure of $M$, then we
define the constant $K\geq 0$ as
\begin{equation}
\label{eq:defcurvature}
  K^2 = \sup_{\xi,\eta,\zeta \in \lie{m}_p\backslash 0} \frac{\lVert [\xi,[\eta,\zeta]]\rVert}{\lVert\xi\rVert \lVert\eta\rVert \lVert\zeta\rVert}.
\end{equation}
If $M$ is flat, then $K=0$.
If $M$ is the $n$-sphere of radius $r$, then $K= 1/r$.

Our convergence result is now formulated as follows.

\begin{theorem}\label{thm:main_result_convergence}
If $KD$ is sufficiently small, then \autoref{alg:mainalgorithm} with natural boundary conditions converges linearly.
If $KD$ and $\max\set{ K\norm{v_0}, K\norm{v_N}}$ are both sufficiently small, then \autoref{alg:mainalgorithm} with clamped boundary conditions converges linearly.
\end{theorem}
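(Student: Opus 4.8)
The plan is to recognize the iteration map \eqref{eq: fixedpointiter} as a perturbation of the corresponding \emph{linear} (Euclidean) iteration and to invoke the Banach fixed point theorem on a suitable closed ball in $(\lie{m}_{p_1}\oplus\cdots\oplus\lie{m}_{p_{N-1}})$, using the Riemannian norms $\norm{\cdot}_{p_i}$ on each factor. First I would set up the affine map $F(\bvec{\vel}) = \tfrac14\bvec{\wel}(\bvec{\vel}) + \tfrac12\bvec{\vel}$ and observe that a fixed point of $F$ is precisely a solution of the $C^2$ condition \eqref{eq:c2_condition}, rewritten through the $\wel_i$ of \eqref{eq:weldef}; the boundary terms $\vel_0(\vel)$, $\vel_N(\vel)$ enter $\bvec{\wel}$ either as constants (clamped) or as the contractive linear expressions \eqref{eq:bndnat} (natural). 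The point is that $\bvec{\wel}$ is built entirely out of the three operations $E$, $\log$, and the group action, so by the results developed in the technical sections (the expansions of $E_p$, $\log_p$, and the symmetry map in \autoref{sec: lemmata}) one gets quantitative Taylor-type estimates: schematically, $\wel_i(\vel_{i-1},\vel_i,\vel_{i+1}) = \vel_{i-1} - 2\vel_i + \vel_{i+1} + \log_{p_i}(q_{i+1}^-)\text{-type terms} + R_i$, where $R_i$ collects the curvature corrections, each bounded by a constant times $K^2$ times products of the relevant lengths $\norm{\vel_j}$ and $d(p_i,p_{i\pm1})\le D$.

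The key steps, in order, are: (1) Establish a priori that on the ball $B = \setc{\bvec{\vel}}{\norm{\vel_i}_{p_i}\le CD\ \forall i}$ for a suitable absolute constant $C$, one has $\norm{\vel_i}\lesssim D$ controls all the arguments appearing inside $E$ and $\log$, so that the logarithms are well-defined (no conjugate points) — this uses the blanket assumption after \eqref{eqdef:log} together with $KD$ small. (2) Show $F$ maps $B$ into itself: here the linear part of $F$ restricted to the second-difference structure, together with the $\tfrac12$ damping, keeps the Euclidean model inside a slightly smaller ball, and the nonlinear remainder is $O(K^2D^3)=O((KD)^2\cdot D)$, hence negligible when $KD$ is small; the natural boundary map contributes a factor $\le\tfrac12$ and the clamped one contributes the fixed constants $\vel_{\text{start}},\vel_{\text{end}}$, which is why one additionally needs $K\norm{v_0},K\norm{v_N}$ small in the clamped case. (3) Show $F$ is a contraction on $B$: differentiate $\bvec{\wel}$ and bound $\norm{DF(\bvec{\vel})-\tfrac12 I - \tfrac14 L}$, where $L$ is the Euclidean second-difference operator; the Euclidean part $\tfrac12 I + \tfrac14 L$ has operator norm $<1$ (its eigenvalues are $\tfrac12+\tfrac14\cdot 2(\cos\theta-1)\in(0,\tfrac12]$ on the Dirichlet model, with the natural/clamped boundary only shifting these into $[0,1)$), and the correction is again $O(KD)$ in operator norm, so the total Lipschitz constant is $<1$ once $KD$ is small enough. (4) Conclude by Banach that the unique fixed point in $B$ exists and that the iterates converge to it geometrically, i.e. linearly, with rate the contraction constant.

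The main obstacle I expect is Step (3), specifically the curvature estimate on $DF$: one must differentiate the composition $\log_{p_i}(E_{p_i}(-\vel_i)E_{p_{i+1}}(-\vel_{i+1})\cdot p_{i+1})$ in all of $\vel_{i-1},\vel_i,\vel_{i+1}$ and extract the leading second-difference operator cleanly, with all higher-order terms controlled by the single constant $K$ of \eqref{eq:defcurvature}. This requires the BCH-type expansion of $\exp(\xi)\exp(\eta)$ truncated with an explicit remainder bounded via the nested-bracket norm $K$, plus the derivative formula for $\log_p$ and its bound — exactly the content one would package into the lemmas of \autoref{sec: lemmata}, and then feed into \autoref{sec: conv}. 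A secondary technical point is verifying that the operator $\tfrac12 I+\tfrac14 L$ together with the two boundary prescriptions is a contraction \emph{uniformly in $N$}: for natural boundary conditions the boundary rows of $L$ are modified by \eqref{eq:bndnat}, and one should check the resulting matrix still has spectral radius bounded away from $1$ independently of $N$ (which it does, since it is a small perturbation of a symmetric negative-definite-plus-half structure), so that the smallness threshold on $KD$ can be chosen independent of the number of interpolation points.
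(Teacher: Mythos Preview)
Your overall structure---Banach fixed point via an invariant ball plus a contraction estimate on the derivative of the iteration map---is the same scaffold the paper uses. The one genuine methodological difference is in how you propose to obtain the contraction constant. You want to linearize as $DF \approx \tfrac12 I + \tfrac14 L$ with $L$ the tridiagonal second-difference operator, and then appeal to a \emph{spectral} argument (eigenvalues $\tfrac12+\tfrac12(\cos\theta-1)$) to get operator norm $<1$. The paper instead works throughout in the $\ell^\infty$ norm $\norm{\bvec\xi}=\max_i\norm{\xi_i}$ and bounds the row sums of the Jacobian directly: the diagonal block $\partial\phi_i/\partial\xi_i$ is shown to be $O(K^2)$ (the crucial cancellation, your ``$-2$ from $L$ cancels $\tfrac12 I$'' observation, is established rigorously in \autoref{prop: ondiag}), while each off-diagonal block $\partial\phi_i/\partial\xi_{i\pm1}$ is $\tfrac14+O(K^2)$ (\autoref{prop: offdiag}), giving row sum $\le\tfrac12+O(K^2D^2,K^2\norm{\bvec\xi}^2)$, automatically uniform in $N$. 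This sidesteps exactly the worry you flag at the end: with the $\ell^\infty$ norm there is no need to analyze eigenvalues of the modified boundary matrix or to pass from spectral radius to operator norm for the non-symmetric natural-BC case.

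Two small corrections. First, your flat-case expansion is off: in Euclidean space $\wel_i = (p_{i+1}-p_{i-1}) - \vel_{i-1}-2\vel_i-\vel_{i+1}$, so the linear part of $F$ has off-diagonal entries $-\tfrac14$, not $+\tfrac14$; this does not affect the row-sum bound, but your eigenvalues become $\tfrac12\cos\theta\in(-\tfrac12,\tfrac12)$ rather than $(0,\tfrac12]$. Second, the curvature correction to $DF$ is $O((KD)^2)$, not $O(KD)$, as the leading nonlinear terms in $\dexp$ and $\Ad_{\exp}$ are quadratic in the bracket structure (cf.\ \autoref{lem: dexpbound}); you had this right in Step~(2) but slipped in Step~(3).
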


\begin{proof}[Outline of proof]
  Again, the proof is long and technical, and therefore given in \autoref{sec: conv}.
  Here we give a brief outline.

  The proof is based on showing that the iteration mapping \eqref{eq: fixedpointiter} is a contraction.
  For convenience, we use the variables $\xi_i \in \lie{m}_{p_i}$ instead of $v_i \in \Tan_{p_i}M$.
  The iteration mapping is denoted $\phi$.
  \begin{enumerate}[label=\textup{(\roman*)}]
    \item 
     The first step is to give an invariant region of the iteration mapping.
    This is given in \autoref{prop: invarea}, which proves existence of $V>0$ (depending on both $D$ and $K$), such that $\lVert \bvec{\xi}\rVert \leq V$ implies $\lVert\phi(\bvec{\xi})\rVert \leq V$.
    \item 
    The next step is to prove that $\phi$ is a contraction, which is established by a series of estimates on the partial derivatives on $\phi$ (\autoref{prop: ondiag}, \autoref{prop: offdiag}, and \autoref{prop: boundary}).
    These estimates depend on $K$.
    \item 
    The final step, in \autoref{sub:convergence_proof_final}, is to use the contraction mapping theorem. 
  \end{enumerate}
\end{proof}



\section{Examples}\label{sec:examples}

\subsection{Unit quaternions}
Unit quaternions (versors) are used extensively in computer graphics to represent 3-D rotations.
They are elements of the form $q=q_0 + q_1 i + q_2j + q_3 k$ with
\begin{equation}
  q_0^2 + q_1^2 + q_2^2 + q_3^2 = 1.
\end{equation}
Thus, we can identify unit quaternions with $\Sphere{3}$.
In turn, $\Sphere{3}$ is a double cover of $\SO(3)$.
A point on $\Sphere{3}$ therefore gives a rotation matrix, and likewise any rotation matrix corresponds to two antipodal points on $\Sphere{3}$.
The rotation of a vector $p=(p_x,p_y,p_z)$ by a unit quaternion $q$ can be compactly written using quaternion multiplication as
\[\rota_q (p)= qpq^{-1}\]
where $p = p_x i + p_y j + p_z k$ is thought of as a pure imaginary quaternion.

Since $\Sphere{3} \simeq \OO(4)/\OO(3)$ is a Riemannian symmetric space (with respect to the standard Riemannian metric), we can use \autoref{alg:mainalgorithm} to obtain $C^2$~continuous spline interpolation between rotations.
Since geodesics on $\Sphere{3}$ are given by great circles, it is straightforward to derive the mappings $E$ and $\log$.

The resulting $C^2$-curve interpolating 5 orientations is shown in \autoref{fig:rotation}. 
In the figure, an element $\gamma(t) \in \Sphere{3}$ is represented by the rotated basis vectors \[\set{\rota_{\gamma(t)}(i),\rota_{\gamma(t)}(j),\rota_{\gamma(t)}(k)}.\]
The actual interpolation points are marked with bolder lines.

Note that $\Sphere{3}$ can be canonically identified with $\SU(2)$.
Note also that $\SU(2)$ has a symmetric space structure as the quotient $(\SU(2) \times \SU(2))/\SU(2)$ \cite{MKVe16}.
Here, however, we considered the symmetric space structure $\Sphere{3} = \SO(4)/\SO(3)$.
These two symmetric space structure are locally equivalent.
This is because, $\SU(2) \times \SU(2)$ is a double cover of $\SO(4)$ and $\Sphere{3}$ is a double cover of $\SO(3)$.
Our algorithm gives the same result regardless of the choice of either of those two symmetric space structures on $\Sphere{3}$.
From this point of view, our algorithm could be used for spline interpolation on any compact Lie group $G$, using the Cartan--Schouten symmetric space structure $(G\times G)/G$. 

\begin{figure}
  \centering
  \includegraphics[width=.8\textwidth]{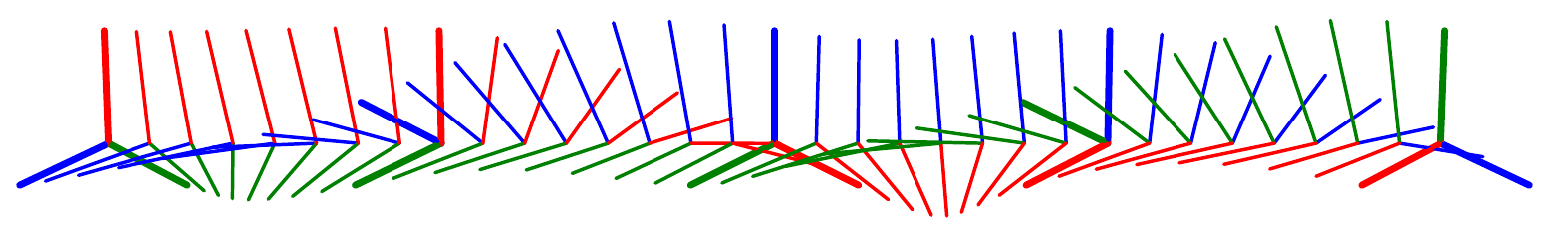}
  \caption{Interpolation between 5 orientations, illustrated by the orientation of the frame consisting of the axes $i$ (red), $j$ (green), and $k$ (blue). 
  The bold frames mark the interpolants.}
  \label{fig:rotation}
\end{figure}

\subsection{Quantum states}
\label{sec:quantumexample}
The control of quantum states is an important subproblem in quantum information and the realization of quantum computers~\cite{DoPe2010}.
The objective is to find a time-dependent Hamiltonian $H(t)$ designed to `steer' a given quantum state $|\psi_0\rangle$ through a sequence of states $|\psi_1\rangle,|\psi_2\rangle,\ldots,|\psi_N\rangle$ at given times $t_1,\ldots,t_N$.
As instantaneous switching of the Hamiltonian is not experimentally feasible \cite{BrHoMe2012}, the interpolating curve $|\psi(t)\rangle$ should be at least $C^2$ continuous.

If the quantum state space is finite dimensional, corresponding to an ensemble of qubits, then, in the geometric description of quantum mechanics \cite{Ki1978,Ki1979}, the phase space is given by complex projective space $\CC P^n$.
The quantum control problem can then be seen as a two-step process:
\begin{enumerate}
  \item Find an interpolating curve $t\mapsto |\psi(t)\rangle \in \CC P^n$ such that $|\psi(0)\rangle = |\psi_0\rangle$ and $|\psi(t_k)\rangle = |\psi_k\rangle$.
  \item Using the homogeneous structure $\CC P^n \simeq \UU(n+1)/\UU(n)\times \UU(1)$, lift $|\psi(t)\rangle$ to a curve $t\mapsto g(t) \in \UU(n+1)$ such that $g(0) = e$ and $\pi(g(t)) = |\psi(t)\rangle$.\footnote{The lifting is, of course, not unique.
  It is natural to minimize the change in Hamiltonian $\dot H$, as suggested by \citet*{BrHoMe2012}.}
  The time-dependent Hamiltonian is then given by $H(t) = - i \dot g(t) g(t)^{-1}$.
\end{enumerate}
We can use \autoref{alg:mainalgorithm} for the first step.
(The second step is not treated in this paper.)

The simplest case of a single qubit corresponds to the phase space $\CP{1}$, which is isomorphic to a sphere (called the \emph{Bloch sphere}).
Using 6 interpolation points, the resulting interpolating curve is visualized on the Bloch sphere in \autoref{fig:quantum}.

\begin{figure}
  \centering
  \includegraphics[width=.6\textwidth]{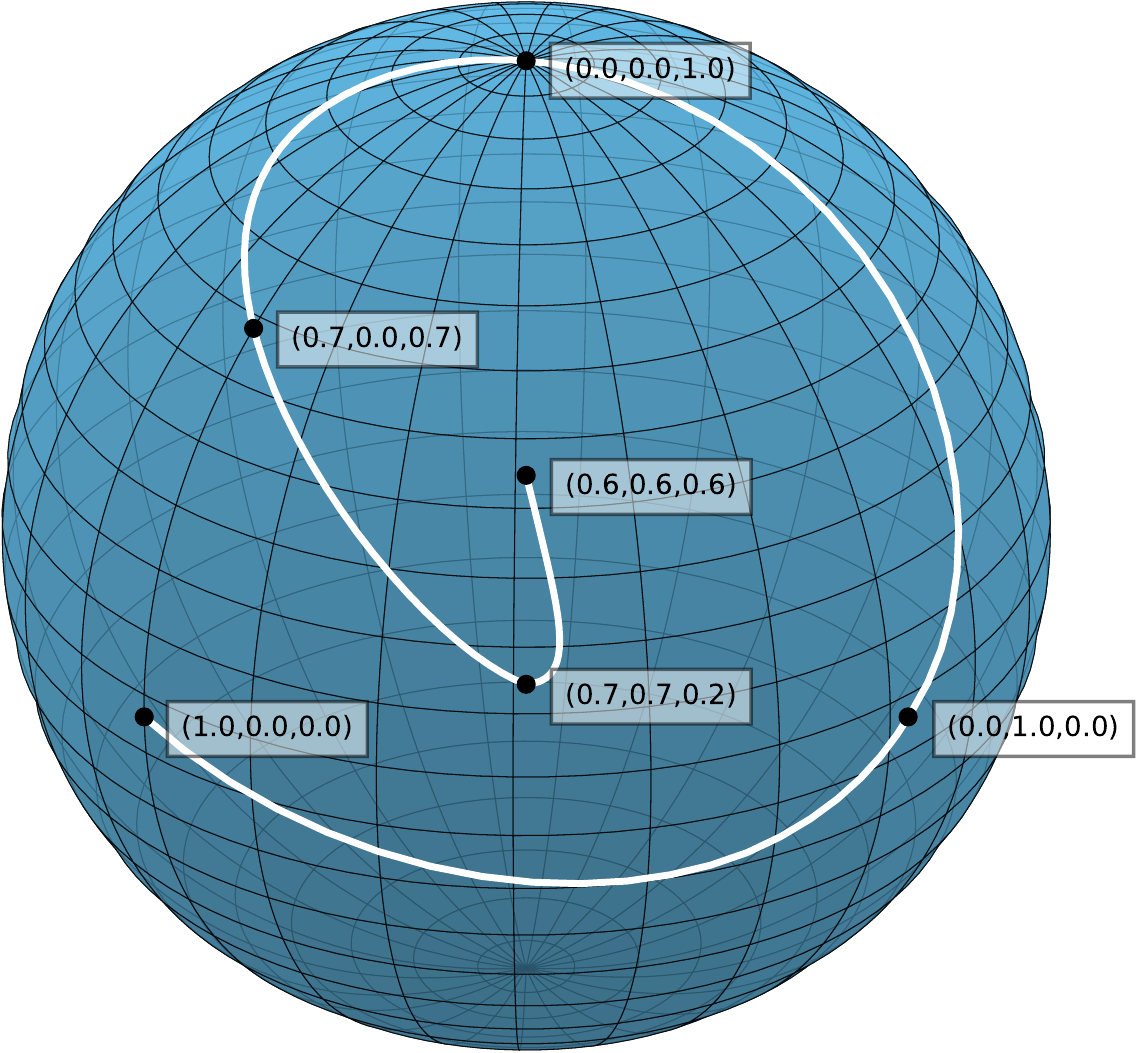}
  \caption{Interpolation between 6 qubit states visualized on the Bloch sphere.
  The resulting curve is $C^2$ and therefore generated by a time-dependent Hamiltonian $iH(t) \in \mathfrak{u}(n+1)$ where $t\mapsto\dot H(t)$ is continuous.
  }
  \label{fig:quantum}
\end{figure}

\subsection{Affine shapes}
\label{sec:visionexample}

The space of \emph{affine shapes} is the quotient set whose elements consist of $k+1$ labelled (\emph{i.e.}~ordered) points in $\RR^n$ defined up to affine transformations.
Thus, $(p_0,\ldots,p_k)\in (\RR^n)^{k+1}$ and $(\tilde p_0,\ldots,\tilde p_k) \in (\RR^n)^{k+1}$ define the same affine shape if there exists an affine transformation $p\mapsto Ap+q$ such that $(\tilde p_0,\ldots,\tilde p_k) = (Ap_0 + q,\ldots,Ap_k + q)$.
The study of affine shapes has many applications in computer vision~\cite{KeBaCaLe09}.
An example is the identification of planar surfaces (such as facades of buildings) in images taken from different projection angles.

The space of affine shapes with \emph{full rank} (meaning that the $k+1$ points are not contained in an affine hyperplane of $\RR^n$) can be identified with the real Grassmannian manifold $\mathrm{Gr}(n,k)$ of $n$-dimensional subspaces in $\RR^k$.
Indeed, choose a coordinate system
and place the coordinates of the vectors $p_1 - p_0,\dotsc,p_k-p_0$ column by column in an $n \times k$ matrix.
The kernel of this matrix
is then a subspace of $\RR^k$.
The full rank assuption means
that the kernel has dimension $k-n$.
This gives us a point in $\mathrm{Gr}(k-n,k)$,
which,
by the canonical isomorphism between $\mathrm{Gr}(k-n,k)$ and $\mathrm{Gr}(n,k)$,
gives a point in $\mathrm{Gr}(n,k)$.

Since the Grassmannian $\mathrm{Gr}(n,k)$ is a symmetric space, we can use our algorithm to construct $C^2$-splines between affine shapes.

Let us study a particular case of affine shapes corresponding to $3+1$ points in $\RR^2$.
Our aim is to create a spline between such shapes, defined by (A)--(D) in \autoref{fig:affineshape}.
For every shape,
we obtain a $2\times 3$ matrix of full rank, which defines our element in $\mathrm{Gr}(2,3)$.
We then use \autoref{alg:mainalgorithm} to construct a spline interpolating the points in $\mathrm{Gr}(2,3)$.

To visualize the result, notice that $\mathrm{Gr}(2,3)$ is isomorphic to the real projective plane $\RR P^2$. 
Indeed, this follows since the kernels of the $2\times 3$ matrices are one dimensional,
that is, they are directions in $\RR^3$.
We may therefore visualise the affine shapes as follows.
First, we compute the intersection of the kernel direction with the half-sphere
\begin{equation}
  \setc[\big]{(x,y,z) \in \RR^2}{z \leq 0}
  .
\end{equation}
We then compute the stereographic projection from the point of coordinate $(0,0,1)$.
This function is explicitly given by 
\begin{equation}
  (x,y,z) \mapsto \paren[\Big]{\frac{x}{1-z}, \frac{y}{1-z}}
  .
\end{equation}
The resulting projected spline curve is plotted in \autoref{fig:stereographic}.

We stress that interpolation of affine shapes cannot, in general, be accomplished by standard spline interpolation in vector spaces.
Indeed, if we introduce local coordinates by expressing the point $p_2 = a e_0 + b e_1$ in the basis spanned by the basis vectors $e_0 = p_1-p_0$ and $e_1 = p_3-p_0$ with origin at $p_0$, then the $b$-coordinate is infinite as the spline passes through the affine shape (C).
This is illustrated in \autoref{fig:shapecoordinate}.
More generally, any local coordinate chart would have similar artifacts for some shapes.

\tikzset{caxis/.style={thick, ->}}
\tikzset{xaxis/.style={}}
\tikzset{yaxis/.style={}}

\newcommand*\drawpoints[7]{
  \begin{tikzpicture}[scale=1.2]
    \node (p0) at (0,0) {};
    \node (pl) at (-1,0) {};
    \node (pr) at (1,0) {};
    \node (q) at (0,1) {};
    \filldraw[gray]
    (p0) circle [radius=2pt]
    (pr) circle [radius=2pt]
    (q) circle [radius=2pt]
    (pl) circle [radius=2pt]
    ;
    \path (q) ++(0,-0.25) node {#1};
    \path (pl) ++(0,-0.25) node {#2};
    \path (p0) ++(0,-0.25) node {#3};
    \path (pr) ++(0,-0.25) node {#4};
    \path[xaxis] (#5) -- (#6);
    \path[yaxis] (#5) -- (#7);
    \end{tikzpicture}
}

\begin{figure}
  \centering
  \subfloat[{}]{
      \drawpoints{0}{1}{2}{3}{q}{pl}{pr}
}\qquad
\subfloat[]{
    \drawpoints{1}{0}{2}{3}{pl}{q}{pr}
}\\
\subfloat[]{ 
    \drawpoints{2}{0}{1}{3}{pl}{p0}{pr}
}\qquad
\subfloat[]{ 
    \drawpoints{3}{0}{1}{2}{pl}{p0}{q}
}
  \caption{
  (A)--(D) shows different sets of labelled points $p_0,p_1,p_2,p_3$ in $\RR^2$.
  Each such set defines an affine shape, corresponding to an element in $\mathrm{Gr}(3,2)$.
  Using \autoref{alg:mainalgorithm} we construct a spline interpolating between the affine shapes (A)--(B)--(C)--(D)--(A), thus giving a closed curve.
  The resulting spline is visualized in \autoref{fig:stereographic}.
  }
\label{fig:affineshape}
\end{figure}


\begin{figure}
  \centering
  \includegraphics[width=.45\textwidth]{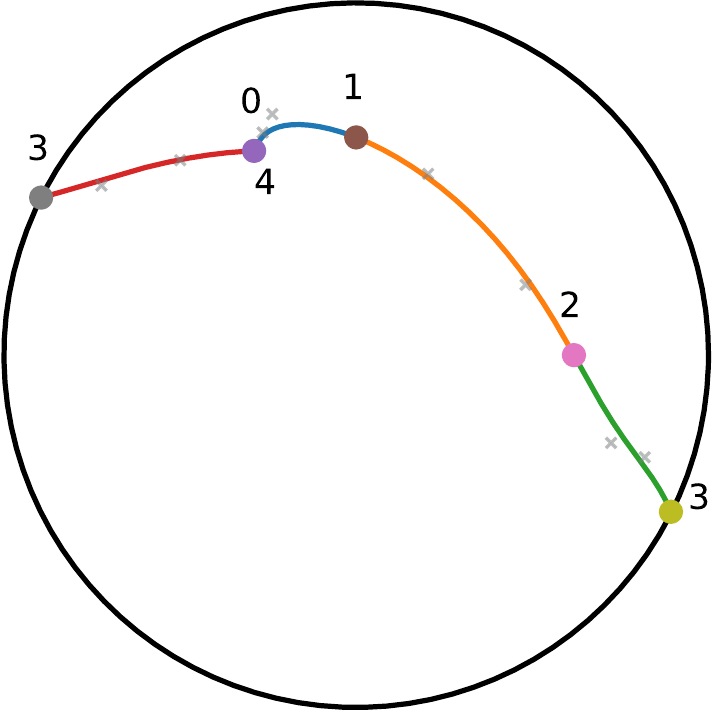}.
  \caption{
    Using that $\mathrm{Gr}(3,2)\simeq \RR P^2$, we visualize the spline interpolating the affine shapes (A)--(B)--(C)--(D)--(A) in \autoref{fig:affineshape} using a stereographic projection.
    As we identify any two opposite points of the circle,
    the curve is closed.
  }
\label{fig:stereographic}
\end{figure}

\begin{figure}
  \centering
  \includegraphics[width=.8\textwidth]{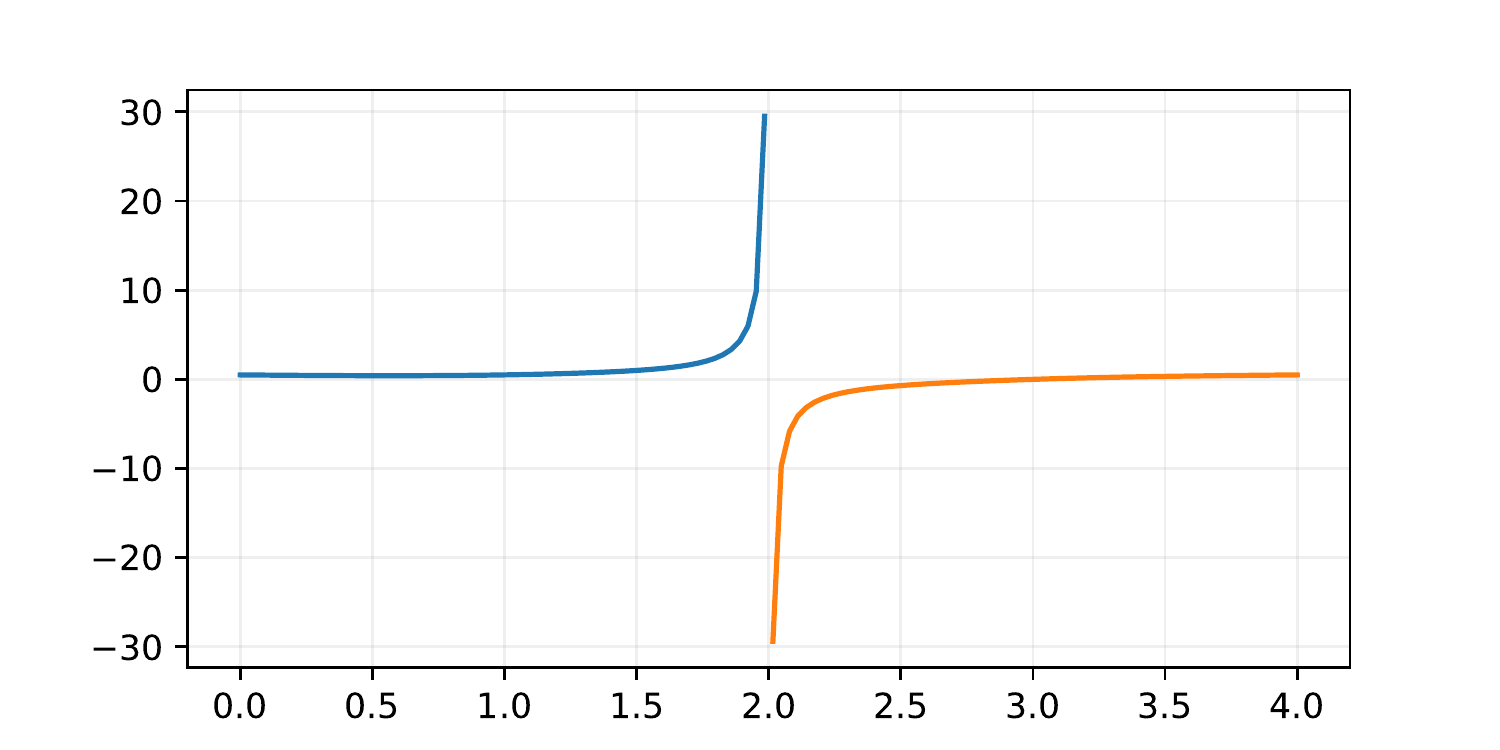}
  \caption{This figure illustrates why it is not possible, in general, to use a local coordinate chart to construct interpolation between affine shapes.
  In this example, the affine shape spline is described in a coordinate system obtained by placing the origin at $p_0$, with basis vectors $e_0 = p_1-p_0$ and $e_1 = p_3-p_0$, and then expressing $p_2 = ae_0 + b e_1$ in this basis. 
  Then $(a,b)$ constitutes local coordinates for $\mathrm{Gr}(3,2)$.
  However, there is a singularity at the shape (C), which implies that the $b$--coordinate becomes infinite there.
  This effect is only apparent in these local coordinates: nothing is infinite in the actual computed spline curve,
  as shown in \autoref{fig:stereographic}.
  Any other choice of local coordinates suffers analogous problems.
  }
  \label{fig:shapecoordinate}
\end{figure}

\section{Outlook}\label{sec:outlook}
There are a number of possible extensions of our methods and proofs to be studied in future work.
Here we list some of them.
\begin{itemize}
\item
  We surmise that the conditions of~\autoref{thm:main_result_C2} are in fact valid for more general symmetric spaces than Riemannian ones.
  In particular, we would like to be able to prove that result directly, without resorting to the previous result of Popiel and Noakes, which is the only reason for us to restrict to the \emph{Riemannian} symmetric space case at this point.
\item
  The only structure that we need to implement~\autoref{alg:mainalgorithm} and to state~\autoref{thm:main_result_C2}
  is an invariant connection (see~\eqref{eq:connection}), which only requires the homogeneous space at hand to be \emph{reductive}~\cite[\S\,4]{MKVe16}.
  We would expect similar results in that case, which would for instance cover arbitrary Lie groups, as well as Stiefel manifolds.
\item
  A modification of our algorithm is possible in the more general case of \emph{arbitrary} Riemannian manifolds (not necessarily symmetric), using the original $C^2$ condition of \citet{PoNo2007}.
  We are working on an implementation and a proof of convergence.
\item
  Our algorithm is currently based on a fixed point iteration.
  One could instead use a Newton iteration, which would ensure convergence in one step in the Euclidean case.
\end{itemize}

\subsection*{Acknowledgments}
This project has received funding from the European Union’s Horizon 2020 research and innovation programme under grant agreement No 661482, from the Swedish Foundation for Strategic Research under grant agreement ICA12-0052, and from the Knut and Alice Wallenberg Foundation under grant agreement KAW-2014.0354.

\appendix

\section{Identities and bounds on symmetric spaces}
\label{sec: lemmata}

In this section we prove several identities and bounds on (Riemannian) symmetric spaces which are used in \autoref{sec:C2reg} and \autoref{sec: conv} below.

Let $M= G/H_p$ be a symmetric space,
where $p\in M$ and $\lie{g}=\lie{m}_p\oplus \lie{h}_p$ is the decomposition described in \autoref{sub:symmetric_spaces},
and let $\pi_p\from \lie{g} \to \lie{m}_p$ be the canonical projection.
Denote by $\Psi^p\from G \to M$ and $\tilde{L}_g\from M\to M$ the maps
\[
  \Psi^p(g)=\tilde{L}_g(p)=g\cdot p
  ,
\]
as usual, $L_g\from G\to G$ and $R_h\from G\to G$ are defined by
\[L_g(h)=R_h(g)=gh \]
and we identify $\lie{g}=\Tan_eG$ to define
\[\Tan_eL_{g}\from \lie{g}\to \Tan_g G.\]

By an abuse of notation, we will use $g\act$ to denote any of the following action maps
\[
  \begin{aligned}
    \tilde{L}_g &\from M \to M,\\
    \Tan\tilde{L}_g &\from \Tan M \to\Tan M\\
\Ad_g \coloneqq\Tan_{g} R_{g^{-1}}\Tan_{e} L_{g}  &\from \lie{g}\to \lie{g} .
    \end{aligned}
  \]

The notation $\pairing{\cdot}{\cdot}_p$ is used to denote the natural pairing of
one-forms and tangent vectors over a point $p$.

The infinitesimal action \eqref{eq:inf_action} and the decomposition $\lie{g}=\lie{m}_p\oplus \lie{h}_p$  define a \emph{principal connection}~\cite{MKVe16}, i.e.,
a $\lie{g}$-valued one-form $\varpi$ on $M$
which fulfills
\begin{subequations}
\begin{align}
\label{eq:connection}
  \pairing{\varpi}{v}_p \act p &= v & \text{(consistency)}\\
  \pairing{\varpi}{g \act v}_{g\act p} &= g \act \pairing{\varpi}{v}_p
                                           & \text{(equivariance)}
\end{align}
\end{subequations}
for all $v \in \Tan_pM$.

By construction, $\lie{m}_{p} = \varpi(T_pM)$.

Throughout this section and the following, $\dexp$ denotes the trivialized derivative of the Lie group exponential, that is
\[\Tan_\xi \exp  = T_eR_{\exp(\xi)} \dexp_\xi  = T_eL_{\exp(\xi)} \dexp_{-\xi}
,
\]
where the  last equality follows from differentiating $\exp(\xi)\exp(-\xi) =
\id$.

\begin{lemma}
\label{prop:expident}
Fix $p \in M$, and define $q\from \lie{g}\to M$ by
$q(\xi)=\exp(\xi)\act p$.
Then
\[\Tan_{\xi}q (\delta \xi)= \exp(\xi)\act (\pi_p \dexp_{-\xi}(\delta \xi) \act p), \qquad\forall\, \delta\xi\in\lie{g}.
\]
\end{lemma}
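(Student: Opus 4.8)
The plan is to factor $q$ through the orbit map and reduce everything to the infinitesimal action at the single point $p$. Writing $q = \Psi^p \circ \exp$ with $\Psi^p(g) = g \act p$, the chain rule together with the identity $\Tan_\xi \exp = \Tan_e L_{\exp(\xi)}\,\dexp_{-\xi}$ recalled just above gives
\[
  \Tan_\xi q\,(\delta\xi) = \Tan_{\exp(\xi)}\Psi^p\bigl(\Tan_e L_{\exp(\xi)}(\dexp_{-\xi}(\delta\xi))\bigr),
\]
so it suffices to evaluate the composite $\Tan_g \Psi^p \circ \Tan_e L_g$ on an arbitrary $\zeta \in \lie{g}$, with $g = \exp(\xi)$ and $\zeta = \dexp_{-\xi}(\delta\xi)$.

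First I would note that the orbit map intertwines left translation on $G$ with the action on $M$: for all $h \in G$,
\[
  \Psi^p(gh) = g \act (h \act p) = \tilde L_g(\Psi^p(h)),
\]
i.e. $\Psi^p \circ L_g = \tilde L_g \circ \Psi^p$. Differentiating this identity at $e \in G$, and using $\Psi^p(e) = p$, yields
\[
  \Tan_g \Psi^p \circ \Tan_e L_g = \Tan_p \tilde L_g \circ \Tan_e \Psi^p.
\]
Now $\Tan_e \Psi^p(\zeta) = \zeta \act p$ is by definition the infinitesimal action \eqref{eq:inf_action}, and $\Tan_p \tilde L_g$ is exactly the prolonged action $g\act$ on $\Tan M$. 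Substituting back, with $g=\exp(\xi)$ and $\zeta=\dexp_{-\xi}(\delta\xi)$, gives
\[
  \Tan_\xi q\,(\delta\xi) = \exp(\xi) \act \bigl(\dexp_{-\xi}(\delta\xi) \act p\bigr).
\]

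It remains to insert the projection $\pi_p$. Since $\lie{h}_p$ is the Lie algebra of the isotropy group $H_p$, every $\eta \in \lie{h}_p$ fixes $p$ infinitesimally, so $\eta \act p = 0$; hence the infinitesimal action at $p$ depends only on the $\lie{m}_p$-component, i.e. $\zeta \act p = \pi_p(\zeta) \act p$ for every $\zeta \in \lie{g}$. Applying this with $\zeta = \dexp_{-\xi}(\delta\xi)$ produces the claimed formula. The argument is almost entirely bookkeeping; the one point I expect to require care is keeping the four overloaded meanings of ``$g\act$'' (on $G$ via $L_g$, on $M$ via $\tilde L_g$, on $\Tan M$, and the infinitesimal $\lie g$-action) consistently identified, and making explicit the trivial-action-of-$\lie h_p$ remark that legitimizes replacing $\dexp_{-\xi}(\delta\xi)$ by $\pi_p\dexp_{-\xi}(\delta\xi)$.
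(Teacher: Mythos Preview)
Your proof is correct and follows essentially the same route as the paper. The paper compresses your intertwining step $\Psi^p \circ L_g = \tilde L_g \circ \Psi^p$ into the single identity $\Tan_g\Psi^p(g\act \eta)= g\act(\eta\act p)$, stated without derivation as a ``property of a Lie group action'', and then applies the chain rule and the $\pi_p$ observation exactly as you do; your version simply makes that identity and the bookkeeping of the various actions explicit.
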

\begin{proof}
By the properties of a Lie group action, for $\eta \in \lie{g}$, we have
\[\Tan_g\Psi^p(g\act \eta)= g\act(\eta\act p)\]

We differentiate $q=\Psi^p\circ \exp$ by the product rule and get
\[\begin{aligned}
      \Tan_{\xi} q(\delta \xi) &= \Tan_{\exp(\xi)}\Psi^p  \circ \Tan_{\xi}\exp (\delta \xi)\\
      &= \exp(\xi) \act \left(  \dexp_{-\xi}(\delta \xi) \act p\right)\\
      &= \exp(\xi) \act (\pi_p \dexp_{-\xi}(\delta \xi) \act p)
    \end{aligned}
  \]
    where the final equality is because $\pi_p(\eta)\act p = \eta\act p$ for all $\eta \in \lie{g}$.
\end{proof}

\begin{lemma}
\label{prop:adeven}
Fix $p \in M$.
  Let $\xi, \eta \in\lie{m}_p$, then
\[\pi_p\dexp_{-\xi}(\eta)= \frac{\sinh(\ad_\xi)}{\ad_\xi}(\eta)= \pi_p \dexp_{\xi}(\eta).
\]
\end{lemma}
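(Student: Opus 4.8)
The plan is to compute $\dexp_{-\xi}(\eta)$ from its standard power-series expansion and then project onto $\lie m_p$ using the algebraic conditions \eqref{eq: algcond}. Recall that
\[
  \dexp_{-\xi} = \frac{\id - \ee^{-\ad_{-\xi}}}{\ad_{-\xi}} = \frac{\id - \ee^{\ad_\xi}}{-\ad_\xi} = \sum_{k\geq 0} \frac{(-\ad_\xi)^k}{(k+1)!}
  ,
\]
so that $\dexp_{-\xi}(\eta) = \sum_{k\ge 0}\frac{(-1)^k}{(k+1)!}\ad_\xi^k(\eta)$. The key observation is a parity statement: since $\xi,\eta\in\lie m_p$, the bracket conditions $[\lie m_p,\lie m_p]\subset\lie h_p$ and $[\lie h_p,\lie m_p]\subset\lie m_p$ imply, by an immediate induction on $k$, that $\ad_\xi^k(\eta)\in\lie m_p$ when $k$ is even and $\ad_\xi^k(\eta)\in\lie h_p$ when $k$ is odd. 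Applying $\pi_p$ therefore kills every odd term and fixes every even term, leaving
\[
  \pi_p\dexp_{-\xi}(\eta) = \sum_{j\ge 0}\frac{1}{(2j+1)!}\ad_\xi^{2j}(\eta) = \frac{\sinh(\ad_\xi)}{\ad_\xi}(\eta)
  ,
\]
where the last expression is understood, as usual, via its (everywhere convergent) power series $\frac{\sinh z}{z}=\sum_{j\ge0}\frac{z^{2j}}{(2j+1)!}$, which contains only even powers of $\ad_\xi$.

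For the second equality, note that replacing $\xi$ by $-\xi$ in the displayed series for $\pi_p\dexp_{-\xi}(\eta)$ changes $\ad_\xi^{2j}$ into $\ad_{-\xi}^{2j}=(-1)^{2j}\ad_\xi^{2j}=\ad_\xi^{2j}$, so the even series is unchanged; hence $\pi_p\dexp_{\xi}(\eta)=\pi_p\dexp_{-\xi}(\eta)$. Equivalently, one can simply observe that $\dexp_\xi = \sum_{k\ge0}\frac{\ad_\xi^k}{(k+1)!}$ and again project, with only the even terms surviving, giving the same $\frac{\sinh(\ad_\xi)}{\ad_\xi}(\eta)$.

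The only point requiring care — and the main (though minor) obstacle — is the parity induction: one must check that the base cases $\ad_\xi^0(\eta)=\eta\in\lie m_p$ and $\ad_\xi^1(\eta)=[\xi,\eta]\in[\lie m_p,\lie m_p]\subset\lie h_p$ hold, and that each subsequent application of $\ad_\xi=[\xi,\cdot]$ toggles the summand, using $[\lie m_p,\lie h_p]\subset\lie m_p$ and $[\lie m_p,\lie m_p]\subset\lie h_p$ from \eqref{eq: algcond}. Everything else is formal manipulation of the exponential-generating series and the fact that $\pi_p$ is the projection onto $\lie m_p$ along $\lie h_p$.
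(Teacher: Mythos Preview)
Your proof is correct and follows essentially the same approach as the paper: expand $\dexp_{\pm\xi}$ as the standard power series in $\ad_\xi$, use the symmetric-space bracket relations \eqref{eq: algcond} to see that $\ad_\xi^k\eta$ alternates between $\lie m_p$ and $\lie h_p$, apply $\pi_p$ to kill the odd terms, and then observe that the surviving even series is $\frac{\sinh(\ad_\xi)}{\ad_\xi}$, which is even in $\xi$. The paper's proof is organized around $\dexp_\xi$ rather than $\dexp_{-\xi}$, but this is an immaterial difference.
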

\begin{proof}
  As a function $\lie{g}\to \lie{g}$, $\dexp_{\xi}$ is an analytic function of $\ad_{\xi}$ with Taylor series $\dexp_{\xi} = \sum_{k=0}^{\infty} \frac{1}{(k+1)!} \ad_\xi^k$.
  Using the definition \eqref{eq: algcond} of a symmetric space, we have, for any element $\eta\in \lie{m}_p$, that
\[
\ad_{\xi}^k\eta \in \begin{cases} \lie{m}_p, & k \text{ even}\\
                                    \lie{h}_p, & k \text{ odd}
                                    \end{cases}
\]
The effect of $\pi_p$ is thus to eliminate terms in $\lie{h}_p$, leaving only the terms with an even exponent $k$, so
\[
  \pi_p  \dexp_{\xi} \eta =  \sum_{l=0}^\infty \frac{1}{(2l+1)!} \ad_{\xi}^{2l}\eta  = \frac{\sinh(\ad_\xi)}{\ad_\xi}\eta
  .
\]
Note that, as $ {\sinh(\ad_\xi)}/{\ad_\xi}$ is an even function of $\xi$, we have
\[
\pi_p \dexp_{\xi} \eta
=
\pi_p \dexp_{-\xi} \eta
.
\]

\end{proof}
A corollary of \autoref{prop:adeven} is that for $\xi$ near zero,
$\pi_p\dexp_{\xi}$ is invertible as a function $\lie{m}_p\to \lie{m}_p$ with
inverse given by $\ad_{\xi}/\sinh{\ad_{\xi}}$.

\begin{lemma}
\label{prop:involution}
Fix $p \in M$.
Let $I_p$ be the symmetry function at $p \in M$, i.e., $I_p(q) = E_p(-\log_{p} (q)),$ and let $\xi \in \lie{m}_p$, $\delta p \in \Tan_p M$.
Then
\[
\Tan_{\exp(\xi)\cdot p} I_p ( \exp(\xi)\act \delta p) = - \exp(-\xi)\act \delta p .
\]
\end{lemma}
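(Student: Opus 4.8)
The plan is to reduce everything to one elementary identity and then differentiate it. First I would observe that, for $\eta\in\lie{m}_p$, the curve $t\mapsto\exp(t\eta)\act p$ is precisely the interpolating curve \eqref{eq:interp_curve} from $p$ to $\exp(\eta)\act p$; hence $\log_p(\exp(\eta)\act p)=\eta\act p$ by \eqref{eqdef:log}, and $E_p(-\eta\act p)=\exp(-\eta)$ by \eqref{eqdef:movement}, so that (reading $I_p$ as the geodesic symmetry $I_p(q)=E_p(-\log_p q)\act p$ on $M$)
\[
  I_p(\exp(\eta)\act p)=\exp(-\eta)\act p\qquad(\eta\in\lie{m}_p).
\]
Taking $\eta=\xi$ already confirms that $\Tan_{\exp(\xi)\act p}I_p$ maps $\Tan_{\exp(\xi)\act p}M$ into $\Tan_{\exp(-\xi)\act p}M$, consistent with the target formula.

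Next I would differentiate this identity in $\eta$ at $\eta=\xi$. Writing $q(\eta)=\exp(\eta)\act p$, the chain rule yields, for all $\delta\xi\in\lie{m}_p$,
\[
  \Tan_{q(\xi)}I_p\bigl(\Tan_\xi q(\delta\xi)\bigr)=\Tan_{-\xi}q(-\delta\xi).
\]
I would then feed \autoref{prop:expident} into both sides: the left-hand side becomes $\Tan_{q(\xi)}I_p\bigl(\exp(\xi)\act(\pi_p\dexp_{-\xi}(\delta\xi)\act p)\bigr)$, and the right-hand side becomes $-\exp(-\xi)\act(\pi_p\dexp_{\xi}(\delta\xi)\act p)$. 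Since $\xi$ and $\delta\xi$ both lie in $\lie{m}_p$, \autoref{prop:adeven} shows that the two projected vectors coincide, $\pi_p\dexp_{-\xi}(\delta\xi)=\pi_p\dexp_{\xi}(\delta\xi)\eqqcolon\mu\in\lie{m}_p$, so that with $\delta p\coloneqq\mu\act p\in\Tan_pM$ the displayed relation reads literally $\Tan_{\exp(\xi)\act p}I_p(\exp(\xi)\act\delta p)=-\exp(-\xi)\act\delta p$.

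The only step that needs an extra word---and the main, if minor, obstacle---is to argue that $\delta p$ runs over all of $\Tan_pM$ as $\delta\xi$ runs over $\lie{m}_p$. For $\xi$ small enough, which the blanket non-conjugacy assumption on $\log$ permits, the corollary to \autoref{prop:adeven} makes $\pi_p\dexp_{-\xi}$ invertible on $\lie{m}_p$, so $\delta\xi\mapsto\mu$ is a linear isomorphism of $\lie{m}_p$; composing with the isomorphism $\lie{m}_p\to\Tan_pM$, $\mu\mapsto\mu\act p$, of \autoref{sub:symmetric_spaces}, we conclude that $\delta p$ indeed exhausts $\Tan_pM$, and the claim follows. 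Everything else is bookkeeping with the infinitesimal and prolonged actions; I would stress that the symmetric-space hypothesis is used exactly once, in the even-function identity of \autoref{prop:adeven} that forces the two sides of the differentiated relation to match.
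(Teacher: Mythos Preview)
Your proposal is correct and follows essentially the same route as the paper's own proof: both establish the key identity $I_p(q(\eta))=q(-\eta)$ for $q(\eta)=\exp(\eta)\act p$ with $\eta\in\lie{m}_p$, differentiate it via \autoref{prop:expident}, invoke the even-function identity of \autoref{prop:adeven} to match the two sides, and close by noting that $\pi_p\dexp_{-\xi}$ is invertible on $\lie{m}_p$ so that every $\delta p\in\Tan_pM$ is attained. Your write-up is in fact slightly more explicit than the paper's about the surjectivity step and about reading $I_p$ as a map $M\to M$.
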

\begin{proof}
  Let $q(\xi)= \exp(\xi)\act p$. Then $q\from \lie{m}_p \to M$ is a local diffeomorphism and
$I_p(q(\xi)) = q(-\xi)$.
Differentiating both $q(\xi)$ and $I_p(q(\xi))$ with respect to $\xi$, and using \autoref{prop:expident}, we get
\[\begin{aligned}
\Tan_\xi q (\eta)  = \exp(\xi) \act \left( \pi_p \dexp_{-\xi}(\eta) \act p\right)\\
\Tan_{q(\xi)} I_{p_i}\circ \Tan_\xi q  (\eta)   = -\exp(-\xi) \cdot \left(\pi_p \dexp_{\xi}(\eta)\act p\right).
\end{aligned}
\]
Since $q$ is a diffeomorphism, we can choose $\eta\in \lie{g}$ such that $\Tan_{\xi} q(\eta) = \exp(\xi)\act w$, \emph{i.e.}, $w =  \pi_p \dexp_{-\xi}(\eta)\act p$.
Using \autoref{prop:adeven}, we also have $w = \pi_p \dexp_{\xi}(\eta)\act p$, and the result follows.
\end{proof}

\begin{lemma}
\label{prop:logequi}
  Let $g\in G$, and $p,q\in M$. Then
\[
 \log_{g\act p }(q)= g \act \log_p(g^{-1}\act q).
\]
\end{lemma}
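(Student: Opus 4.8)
The plan is to unwind the definitions \eqref{eqdef:movement} and \eqref{eqdef:log} of $E$ and $\log$ and then transport everything through the action of $g$, using two equivariance facts already available: the identity $\lie{m}_{g\act p} = \Ad_g\lie{m}_p$ recorded in \autoref{sub:symmetric_spaces}, and the fact that the prolonged action on $\Tan M$ intertwines the infinitesimal action with the adjoint action, i.e. $g\act(\xi\act p) = (\Ad_g\xi)\act(g\act p)$ for $\xi\in\lie{g}$.

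First I would set $w \coloneqq \log_p(g^{-1}\act q)\in\Tan_p M$ and write $w = \xi\act p$ for the unique $\xi\in\lie{m}_p$ giving this tangent vector; by \eqref{eqdef:movement}--\eqref{eqdef:log} this $\xi$ is characterized by $\exp(\xi)\act p = g^{-1}\act q$. The assertion is then equivalent to $\log_{g\act p}(q) = g\act w$, and by the uniqueness built into \eqref{eqdef:log} it suffices to exhibit $g\act w$ in the correct form, namely to find $\zeta\in\lie{m}_{g\act p}$ with $g\act w = \zeta\act(g\act p)$ and $E_{g\act p}(g\act w)\act(g\act p) = q$.

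Next I would compute $g\act w = g\act(\xi\act p) = (\Ad_g\xi)\act(g\act p)$, so the natural candidate is $\zeta = \Ad_g\xi$, which indeed lies in $\lie{m}_{g\act p} = \Ad_g\lie{m}_p$. Hence by \eqref{eqdef:movement}, $E_{g\act p}(g\act w) = \exp(\Ad_g\xi) = g\exp(\xi)g^{-1}$, and applying this to the point $g\act p$ gives $g\exp(\xi)g^{-1}\act(g\act p) = g\exp(\xi)\act p = g\act(g^{-1}\act q) = q$, as required. Therefore $g\act w = \log_{g\act p}(q)$, which is the claimed identity.

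The only genuinely delicate part is the bookkeeping of the equivariance conventions: verifying $g\act(\xi\act p) = (\Ad_g\xi)\act(g\act p)$ directly from the definition of the prolonged action, and invoking $\lie{m}_{g\act p} = \Ad_g\lie{m}_p$ with the action understood as $\Ad$ on $\lie{g}$; together with the uniqueness in \eqref{eqdef:log}, which holds under the blanket no-conjugate-points assumption. The remaining ingredients are the standard group identity $\exp(\Ad_g\xi) = g\exp(\xi)g^{-1}$ and elementary manipulation of the $G$-action on $M$, so no substantial calculation is needed.
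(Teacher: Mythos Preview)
Your proof is correct. It differs in presentation from the paper's argument, though the underlying ingredients are the same. The paper works at the level of curves: it observes that the interpolating curve $\sigma(t)=[g\act p,q]_t$ is mapped by $g^{-1}$ to the interpolating curve $[p,g^{-1}\act q]_t$, and then differentiates at $t=0$. Implicit in that step is exactly the computation you make explicit, namely that $g^{-1}\exp(t\zeta)g=\exp(t\Ad_{g^{-1}}\zeta)$ and $\Ad_{g^{-1}}\lie{m}_{g\act p}=\lie{m}_p$. Your approach instead verifies the defining equation \eqref{eqdef:log} for $\log_{g\act p}$ directly at the Lie-algebra level, invoking $\exp(\Ad_g\xi)=g\exp(\xi)g^{-1}$ and $\lie{m}_{g\act p}=\Ad_g\lie{m}_p$ explicitly. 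The paper's version is terser and arguably more geometric; yours has the virtue of making transparent exactly which equivariance properties are used and where uniqueness (the no-conjugate-points assumption) enters.
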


\begin{proof}
Let $\sigma(t)=[g\act p, q]_t$ be the interpolating curve between $g\act p$ and
$q$.
Then
$g^{-1}\act \sigma(t)=[p, g^{-1}\act q]_t$,
and differentiating at $t=0$, we get
\[
g^{-1}\act \log_{g\act p}(q) = \log_p(g^{-1}\act q).
\]
\end{proof}

For the final lemmata, we assume $M = G/H_p$ to be a Riemannian symmetric space.
This is equivalent to the existence of an $H_p$-invariant inner product on $\lie{m}_p$, from which we derive a norm $\norm{\cdot}$ on $\lie{m}_p$.

\begin{lemma}
\label{lem: dexpbound}
Let $\xi, \eta \in \lie{m}_p$, then
\[
\begin{aligned}
\norm{\pi_p  \dexp_{\xi} (\eta)} = \norm*{ \frac{\sinh(\ad_\xi)}{\ad_\xi} (\eta)} &\le
\frac{\sinh(K\norm{\xi})}{K\norm{\xi}}\norm{\eta},\\
\norm{(\pi_p \dexp_{\xi})^{-1}(\eta)} = \norm*{ \frac{\ad_\xi}{\sinh(\ad_\xi)}(\eta)} &\le \frac{K\norm{\xi}}{\sin(K\norm{\xi})}\norm{\eta}.
\end{aligned}
\]
(Notice $\sin$, not $\sinh$, in the last denominator.)
\end{lemma}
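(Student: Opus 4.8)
The plan is to prove both inequalities by the same device: expand the relevant function of $\ad_\xi$ as a power series in $\ad_\xi^2$ restricted to $\lie{m}_p$, estimate it term by term, and recognise the resulting scalar series. The starting point is \autoref{prop:adeven} and the corollary following it, which identify, as operators on $\lie{m}_p$, $\pi_p\dexp_\xi=\sum_{l\ge0}\frac{1}{(2l+1)!}\ad_\xi^{2l}$ and $(\pi_p\dexp_\xi)^{-1}=\ad_\xi/\sinh(\ad_\xi)$, the latter read as the Taylor series of $z/\sinh z$ evaluated at $\ad_\xi$; this reading is legitimate, and the operator series converges on $\lie{m}_p$, once $K\norm{\xi}<\pi$ --- the small-$\xi$ regime already mentioned after \autoref{prop:adeven}.

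The only geometric input needed is the estimate $\norm{\ad_\xi^2\eta}\le K^2\norm{\xi}^2\norm{\eta}$ for $\xi,\eta\in\lie{m}_p$. By the algebraic conditions \eqref{eq: algcond}, $\ad_\xi^2\eta=[\xi,[\xi,\eta]]\in\lie{m}_p$, and this bound is exactly the definition \eqref{eq:defcurvature} of $K$ applied to $[\xi,[\xi,\eta]]$, i.e. with two slots of the supremum filled by $\xi$ and one by $\eta$. Since $\ad_\xi^2$ maps $\lie{m}_p$ into itself, iterating gives $\norm{\ad_\xi^{2l}\eta}\le(K\norm{\xi})^{2l}\norm{\eta}$ for all $l\ge0$ and all $\eta\in\lie{m}_p$.

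For the first inequality I would apply the triangle inequality to $\pi_p\dexp_\xi\eta=\sum_l\frac{1}{(2l+1)!}\ad_\xi^{2l}\eta$ together with the iterated estimate, obtaining $\norm{\pi_p\dexp_\xi\eta}\le\bigl(\sum_{l\ge0}\frac{(K\norm{\xi})^{2l}}{(2l+1)!}\bigr)\norm{\eta}=\frac{\sinh(K\norm{\xi})}{K\norm{\xi}}\norm{\eta}$, the coefficients $1/(2l+1)!$ being non-negative already. The second inequality follows the same pattern once the Taylor coefficients of $z/\sinh z$ are replaced by their absolute values: from $\sin(\ii z)=\ii\sinh z$ one gets $z/\sinh z=\sum_{k\ge0}(-1)^k a_k z^{2k}$, where $\sum_{k\ge0}a_k z^{2k}$ is the Taylor series of $z/\sin z$, and hence $\norm{(\pi_p\dexp_\xi)^{-1}\eta}\le\sum_{k\ge0}a_k\norm{\ad_\xi^{2k}\eta}\le\bigl(\sum_{k\ge0}a_k(K\norm{\xi})^{2k}\bigr)\norm{\eta}=\frac{K\norm{\xi}}{\sin(K\norm{\xi})}\norm{\eta}$ for $0<K\norm{\xi}<\pi$ (with equality at $\xi=0$); the same domination justifies convergence of the operator series.

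The step that requires care, and the natural main obstacle, is the sign bookkeeping in the second bound: replacing the coefficients of $z/\sinh z$ by their absolute values is only useful because those absolute values $a_k$ are the coefficients of $z/\sin z$ and are non-negative. I would establish non-negativity from the Euler product $\sin z=z\prod_{n\ge1}(1-z^2/n^2\pi^2)$, which exhibits $z/\sin z=\prod_{n\ge1}(1-z^2/n^2\pi^2)^{-1}$ as a convergent product of power series each with non-negative coefficients, hence itself of that form. This passage from $\sinh$ to $\sin$ --- flipping alternating signs to absolute values --- is exactly what accounts for the ``$\sin$, not $\sinh$'' in the statement. Note that no self-adjointness or sign-definiteness of $\ad_\xi^2|_{\lie{m}_p}$ is required, since all estimates are performed at the level of the power series.
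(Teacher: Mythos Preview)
Your proposal is correct and follows essentially the same approach as the paper's proof, which simply appeals to the power series of $\sinh(\ad_\xi)/\ad_\xi$ and its inverse together with the definition of $K$. You have filled in the details the paper leaves implicit, in particular the non-negativity of the Taylor coefficients of $z/\sin z$ via the Euler product, which is a clean way to justify the $\sinh\to\sin$ passage.
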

\begin{proof}
We use \autoref{prop:adeven} to set $\pi_p \dexp_{\xi}= \frac{\sinh(\ad_\xi)}{\ad_\xi}$.
The lemma at hand then follows from considering the infinite series for
$\frac{\sinh(\ad_\xi)}{\ad_\xi}$ and its inverse, as well as the definition of the curvature $K$ in equation \eqref{eq:defcurvature}.
\end{proof}

\begin{lemma}
\label{lem: distbound}
Fix $p\in M$.
Let $\xi_0, \xi_1 \in \lie{m}_p$, $q_0= \exp(\xi_0)\act p$ and $q_1 = \exp(\xi_1)\act p$.
Recall from \autoref{sec:convresult} that $d$ is the Riemannian distance between two points on $M$.
Let us define
\[
U \coloneqq \frac{1}{2} \left(\norm{\xi_0}+\norm{\xi_1}+ d(q_0,q_1) \right)
.
\]
We then have the inequality
\[
\norm{\xi_0-\xi_1} \le \frac{KU}{\sin(KU)} d(q_0,q_1).
\]
\end{lemma}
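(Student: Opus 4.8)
The plan is to reduce the statement on the symmetric space to a one-dimensional trigonometric inequality governed by the curvature bound $K$, using the logarithm map and its equivariance properties. First I would connect $q_0$ and $q_1$ by the interpolating curve $t \mapsto [q_0,q_1]_t$, which by \autoref{prop:logequi} and the symmetric-space structure can be lifted to a path $t \mapsto \xi(t) \in \lie{m}_p$ with $\exp(\xi(t)) \act p = [q_0,q_1]_t$, at least locally (using the blanket no-conjugate-points assumption). The endpoints satisfy $\xi(0) = \xi_0$ and $\xi(1) = \xi_1$, so $\xi_1 - \xi_0 = \int_0^1 \dot\xi(t)\, \ud t$, and the point is to control $\norm{\dot\xi(t)}$ in terms of the speed of the geodesic, which equals $d(q_0,q_1)$.

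The key computation is to differentiate $\exp(\xi(t)) \act p = [q_0,q_1]_t$ in $t$. By \autoref{prop:expident}, the left side has derivative $\exp(\xi(t)) \act \bigl(\pi_p \dexp_{-\xi(t)}(\dot\xi(t)) \act p\bigr)$, and since the group action is an isometry, the norm of the velocity of $[q_0,q_1]_t$ equals $\norm{\pi_p\dexp_{-\xi(t)}(\dot\xi(t))}$, where I would interpret $\dot\xi(t) \in \lie{m}_p$ after projecting (the $\lie{h}_p$-component is irrelevant since it acts trivially on $T_pM$, so we may take $\dot\xi(t) \in \lie{m}_p$). Thus $\norm{\pi_p\dexp_{-\xi(t)}(\dot\xi(t))} = d(q_0,q_1)$ for all $t$ (the geodesic having constant speed equal to its length). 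Inverting via the corollary to \autoref{prop:adeven} and applying the second estimate in \autoref{lem: dexpbound} gives
\[
\norm{\dot\xi(t)} = \norm*{(\pi_p\dexp_{-\xi(t)})^{-1}\bigl(\pi_p\dexp_{-\xi(t)}(\dot\xi(t))\bigr)} \le \frac{K\norm{\xi(t)}}{\sin(K\norm{\xi(t)})}\, d(q_0,q_1).
\]
Integrating, $\norm{\xi_1 - \xi_0} \le d(q_0,q_1) \int_0^1 \frac{K\norm{\xi(t)}}{\sin(K\norm{\xi(t)})}\, \ud t$.

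It remains to bound $\norm{\xi(t)}$ by $U$ uniformly in $t$, so that, since $x/\sin(x)$ is increasing on $[0,\pi)$, the integrand is at most $\frac{KU}{\sin(KU)}$ and the result follows. This is the main obstacle: $\norm{\xi(\cdot)}$ need not be monotone a priori, so I would argue as follows. The function $t \mapsto \norm{\xi(t)}$ is continuous with $\norm{\xi(0)} = \norm{\xi_0}$; wherever it is differentiable, $\bigl|\frac{\ud}{\ud t}\norm{\xi(t)}\bigr| \le \norm{\dot\xi(t)} \le \frac{K\norm{\xi(t)}}{\sin(K\norm{\xi(t)})}\, d(q_0,q_1)$. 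A Gronwall-type / continuity argument starting from $t=0$ then shows $\norm{\xi(t)} \le \norm{\xi_0} + t\cdot(\text{bound})$ as long as the right-hand side stays below the threshold where $\frac{Kx}{\sin(Kx)}$ blows up; a symmetric argument from $t=1$ gives $\norm{\xi(t)} \le \norm{\xi_1} + (1-t)\cdot(\text{bound})$. Averaging these two and using that along the controlled path the "bound" is itself at most $\frac{KU}{\sin(KU)}d(q_0,q_1)$ (a self-consistent bootstrap, valid precisely when $KU < \pi$, which is the implicit smallness hypothesis) yields $\norm{\xi(t)} \le \tfrac12(\norm{\xi_0} + \norm{\xi_1} + d(q_0,q_1)) = U$ for all $t$. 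Plugging this uniform bound back into the integral estimate completes the proof. The delicate point throughout is ensuring the lifted path $\xi(t)$ exists globally on $[0,1]$ and stays within the injectivity range, which is exactly where the no-conjugate-points assumption and the smallness of $KU$ are used.
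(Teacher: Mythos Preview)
Your overall strategy—lift the geodesic to $\xi(t)\in\lie{m}_p$, differentiate via \autoref{prop:expident}, invert $\pi_p\dexp$ using \autoref{lem: dexpbound}, and integrate—matches the paper exactly. The gap is in your bound $\norm{\xi(t)}\le U$. Your bootstrap does not close: assuming $\norm{\xi(t)}\le U$ gives $\norm{\dot\xi(t)}\le \frac{KU}{\sin(KU)}\,d(q_0,q_1)$, so integrating from the endpoints and averaging yields only
\[
\norm{\xi(t)}\le \tfrac12\bigl(\norm{\xi_0}+\norm{\xi_1}\bigr)+\tfrac12\,\frac{KU}{\sin(KU)}\,d(q_0,q_1),
\]
and for this to be $\le U$ you would need $\frac{KU}{\sin(KU)}\le 1$, which fails for every $KU>0$. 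So the self-consistency never holds except in the flat case.

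The paper sidesteps this entirely by observing that $\norm{\xi(t)}$ is not an abstract quantity to be controlled by an ODE argument: it is literally the Riemannian distance $d(p,\gamma(t))$, since $\xi(t)=L_p(\gamma(t))$. Hence the ordinary triangle inequality on $M$ gives
\[
\norm{\xi(t)}=d(p,\gamma(t))\le \min\bigl\{\norm{\xi_0}+t\,d(q_0,q_1),\ \norm{\xi_1}+(1-t)\,d(q_0,q_1)\bigr\}\le U,
\]
with no circularity. Once you use this observation in place of your Gronwall step, the rest of your argument is correct and identical to the paper's.
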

\begin{proof}
Let $\gamma(t)$ be the geodesic from $\gamma(0) = q_0$ to $\gamma(1) = q_1$, and let
$\alpha(t) \in \lie{m}_p$ be such that
\begin{equation}
\exp(\alpha(t))\act p = \gamma(t)
\label{eq: alphaeq}
.
\end{equation}
Then $\xi_0= \alpha(0)$, $\xi_1 = \alpha(1)$.
We note that, by the triangle inequality, we have
\begin{equation}\begin{aligned}
     \norm{\alpha(t)} = d(p, \gamma(t)) &\le \min \cbrace{ d(p, q_0)+d(q_0, \gamma(t)), d(p, q_1)+d(q_1, \gamma(t))}\\
    &\le\min \cbrace[\Big]{ \norm{\xi_0} + t d(q_0,q_1) , \norm{\xi_1} + (1-t)d(q_0, q_1)}\\
                                    &\le \frac{1}{2}\paren[\Big]{\norm{\xi_0}+\norm{\xi_1}+ d(q_0,q_1) } =U.
\end{aligned}
\label{eq: alphaineq}
\end{equation}
Differentiating \eqref{eq: alphaeq} we get
\[
\exp(\alpha(t)) \act (\dexp_{-\alpha(t)} \dot{\alpha}(t) \act p) = \dot{\gamma}(t)
,
\]
which we rewrite as
\[
(\pi_p  \dexp_{-\alpha(t)} \dot{\alpha}(t)) \act p = \exp(-\alpha(t))\act \dot{\gamma}(t),
\]
where we have used that $\pi_p(\eta) \act p = \eta\act p$ for all $\eta \in \lie{g}$.
Taking norms on both sides and using that $G$ acts by isometries, we have
\[
\norm{\pi_p  \dexp_{-\alpha(t)} \dot{\alpha}(t)} =  \norm{\exp(-\alpha(t))\act
  \dot{\gamma}(t)} = \norm{\dot{\gamma}(t)} = d(q_0, q_1).
\]
Moreover, by \autoref{lem: dexpbound},
\[
  \norm{\dot{\alpha}(t)} \le \frac{K\norm{\alpha(t)}}{\sin(K\norm{\alpha(t)})} d(q_0, q_1)
  .
\]
We now use the monotonicity of $x/\sin x$, and the bound \eqref{eq: alphaineq} to obtain
\[\begin{aligned}
\norm{\alpha(1)- \alpha(0)} \le& \int_0^1 \norm{\dot{\alpha}(t)} \ud t \\
\le& \int_0^1 \frac{K\norm{\alpha(t)}}{\sin(K\norm{\alpha(t)})}d(q_0, q_1) \ud t  \\
\le & \frac{KU}{\sin(KU)}d(q_0, q_1)
.
\end{aligned}
\]
\end{proof}

\section{\texorpdfstring{$C^2$}{C2} continuity}\label{sec:C2reg}

We now prove \autoref{thm:main_result_C2}. The proof relies on a result by Popiel and Noakes \cite{PoNo2007}.

\begin{proof}[Proof of \autoref{thm:main_result_C2}]
At non-integer $t$, the spline is $C^{\infty}$.
We show that the spline is $C^2$ at $t=i\in \set{1, 2, \dotsc , N-1}$, i.e., at the interpolating points $p_i$.
By \cite[Thm.~3]{PoNo2007}, a sufficient and necessary condition for the spline to be $C^2$ at $t=i$ is
\begin{equation}
\Tan_{q_i^+} I_{p_i}(\log_{q_i^+} q_{i+1}^-)  = -\log_{q_i^-}(q_{i-1}^+)-2\log_{q_i^-}(p_i)
,
\label{eq: C2pf1}
\end{equation}
where $I_{p_i}$ is the symmetry function at $p_i$.
We proceed by removing the dependence of $\Tan I_{p_i}$ in \eqref{eq: C2pf1} by using the lemmata from \autoref{sec: lemmata}. 

Recall that $q_i^+=\exp(\xi_i)\cdot p_i$ and $q_i^-=\exp(-\xi_i)\cdot p_i$.
By \autoref{prop:logequi}, we have
\[
\begin{aligned}
\log_{q_i^+} (q_{i+1}^-) &= \log_{\exp(\xi_i) \act p_i} (q_{i+1}^-)\\
                        &= \exp(\xi_i)\act\log_{p_i}(\exp(-\xi_i) \act q_{i+1}^-),\\
\log_{q_i^-} (q_{i-1}^+) &= \log_{\exp(-\xi_i) \act p_i} (q_{i-1}^+)\\
                        &= \exp(-\xi_i)\act\log_{p_i}(\exp(\xi_i) \act q_{i-1}^+)
                        .
\end{aligned}
\]
Using \autoref{prop:involution},  with $w= \log_{p_i}(\exp(-\xi_i) \act q_{i+1}^-)$, we can write
\begin{equation}\begin{aligned}
\Tan_{q_i^+} I_{p_i}(\log_{q_i^+} q_{i+1}^-) &= \Tan_{\exp(\xi_i)\act p_i } I_{p_i}(\exp(\xi_i)\act  \log_{p_i}(\exp(-\xi_i) \act q_{i+1}^-)\\
&=- \exp(-\xi_i)\act \log_{p_i}(\exp(-\xi_i) \act q_{i+1}^-)
.
\end{aligned}
\label{eq: tti1}
\end{equation}
Next, we note that
\begin{equation}\log_{q_i^-}(p_i) = \exp(-\xi_i)\act \vel_i.
\label{eq: tti2}
\end{equation}
Using \eqref{eq: tti1} and \eqref{eq: tti2}, we can rewrite \eqref{eq: C2pf1} as
\begin{equation*}
\begin{split}
  -&\exp(-\xi_i)\act \log_{p_i}(\exp(-\xi_i) \act q_{i+1}^-) =\\
  &\quad -\exp(-\xi_i)\act\log_{p_i}(\exp(\xi_i) \act \exp(\xi_{i-1}) \act p_{i-1})
    - 2 \exp(-\xi_i)\act \vel_i.
\end{split}
\end{equation*}
Acting on the whole equation from the left with $\exp(\xi_i)$ and rearranging, we get
\[\log_{p_i}(\exp(-\xi_i)\act q_{i+1}^-) - \vel_i =  \log_{p_i}(\exp(\xi_i)\act q_{i-1}^+) +\vel_i.
\]
We now obtain the $C^2$ condition \eqref{eq:c2_condition} by using the $C^1$ condition \eqref{eq:c1_condition} to replace $\vel_i = \log_{p_i} (q_i^+) = - \log_{p_i} (q_i^-)$ in the equation above.
\end{proof}

\section{Convergence of the fixed-point method}
\label{sec: conv}

We now proceed to prove \autoref{thm:main_result_convergence}, i.e., that if consecutive interpolation points are sufficiently close then \autoref{alg:mainalgorithm} converges to a solution.
In the proof, we will work with variables in the Lie algebra $\lie{g}$.
Let
\[
  \lie{m}_{\bvec{p}} \coloneqq \lie{m}_{p_0} \times \dotsm \times \lie{m}_{p_N}
  ,
  \]
and let
\[
  \bvec{\xi} \coloneqq (\xi_0, \dotsc, \xi_N) \in \lie{m}_{\bvec{p}},
\]
be the Lie algebra elements defined by
$\xi_i = \pairing{ \varpi}{\vel_i}_{p_i}.$

Switching to variables in the Lie algebra, the fixed-point iteration \eqref{eq: fixedpointiter} becomes a map
\[\begin{aligned}
\bvec{\phi}\colon \lie{m}_{\mathbf{p}}&\to \lie{m}_{\mathbf{p}},\\
\bvec{\phi}(\bvec{\xi}) &= (\xi_0, \dotsc, \xi_N) \to (\phi_0(\bvec{\xi}), \dotsc, \phi_N(\bvec{\xi})).
\end{aligned}
\]
For $1\le i\le N-1$, $\phi_i(\bvec{\xi}) =\frac{1}{4} \pairing{\varpi}{\wel_i(\bvec{\xi})}_{p_i} + \frac{1}{2}
\xi_i$.
To simplify notation, we define $L_p \from M \to \lie{m}_p$  by
\[L_p(q) \coloneqq \pairing{\varpi}{\log_p(q)}_p
  .\]
Now, for $0 < i < N$ the fixed-point iteration map $\phi_i$ can be expressed as
\[
  \begin{split}
    &\phi_i(\bvec{\xi}) =  \frac{1}{2} \xi_i + \\
     &\quad +\frac{1}{4}\paren[\Big]{L_{p_i} (\exp(-\xi_i)  \exp(-\xi_{i+1}) \act p_{i+1})- L_{p_i}(\exp(+\xi_i) \exp(+\xi_{i-1}) \act p_{i-1})}
    ,
\end{split}
\]
and both $\phi_0$ and $\phi_N$ are given by the boundary conditions
\[ \phi_0(\bvec{\xi}) = \pairing{\varpi}{\vel_0(\bvec{\xi})}, \qquad \phi_N(\bvec{\xi}) = \pairing{\varpi}{\vel_N(\bvec{\xi})}
  .
\]
Specifically, for clamped boundary conditions, we get
\begin{equation}
  \phi_0(\bvec{\xi}) = \xi_{\text{start}}, \qquad \phi_N(\bvec{\xi})= \xi_{\text{end}}
  \label{eq:clbc}
\end{equation}
where $\xi_{\text{start}}$ and $\xi_{\text{end}}$ are prescribed.
For free boundary conditions
\begin{equation}
\begin{aligned}
\phi_{0}(\bvec{\xi}) &= \frac{1}{2}\pairing{\varpi}{\log_{p_0}(\exp(-\xi_1)\act
  p_1)}_{p_0}, \\
  \phi_{N}(\bvec{\xi}) &= -\frac{1}{2}\pairing{\varpi}{\log_{p_{N}}(\exp(\xi_{N-1})\act
    p_{N-1})}_{p_N}.
\end{aligned}
\label{eq:frbc}
\end{equation}

It is convenient to introduce the auxiliary variables
\begin{equation}\begin{aligned}
\omega_i^+(\bvec{\xi})&= \pairing{\varpi}{\wel_i^+}_{p_i} =L_{p_i}\left(\exp(-\xi_i)  \exp(-\xi_{i+1}) \act p_{i+1}\right),\\
\omega_i^-(\bvec{\xi})&= \pairing{\varpi}{\wel_i^-}_{p_i}  = L_{p_i}\left(\exp(+\xi_i) \exp(+\xi_{i-1})\act p_{i-1}\right),
\end{aligned}
\label{eq: omega}
\end{equation}
and to write $\phi_i(\bvec{\xi}) = \phi_i^+(\bvec{\xi}) + \phi_i^-(\bvec{\xi}),$
where
\begin{equation}
\begin{aligned}
 \phi_i^+(\bvec{\xi})&= \frac{1}{4} \omega_i^+(\bvec{\xi})+\frac{1}{4} \vel_i,\\
 \phi_i^-(\bvec{\xi})&= -\frac{1}{4} \omega_i^-(\bvec{\xi})+\frac{1}{4} \vel_i.\\
 \end{aligned}
 \label{eq: omegapm}
 \end{equation}
Furthermore, we define the norms
\begin{equation}
\norm{\bvec{\xi}} \coloneqq \max_{0\le i \le N} \norm{\xi_i}= \max_{0\le i \le N} \norm{\vel_i}
\label{eq: norm}
\end{equation}
and
\[\norm{\bvec{\omega}(\bvec{\xi})} \coloneqq \max_{1\le i \le N-1} \max(\norm{\omega_i^+(\bvec{\xi})}, \norm{\omega_i^-(\bvec{\xi})}) .
\]

Recall that $D$  denotes the maximum distance between neighbouring interpolation
points as declared in equation \eqref{eq:maxdistance}.
Notice that by the triangle inequality, we have
\[ \norm{\omega_i^+(\bvec{\xi})} = d(q_i^+, q_{i+1}^-) \le d(p_i, p_{i+1}) + \norm{\xi_i} + \norm{\xi_{i+1}} \le D+ 2\norm{\bvec{\xi}}
\]
and similarly for $\norm{\omega_i^-(\bvec{\xi})}$,
so
\begin{equation}
\label{eq:omegabound}
\norm{\bvec{\omega}(\bvec{\xi})}\le D+  2\norm{\bvec{\xi}}
.
\end{equation}

We are now in position to prove \autoref{thm:main_result_convergence}.
The proof consists in showing the existence of an invariant region, and thereafter bounding the partial derivatives of $\bvec{\phi}$ to show that it is a contraction.

\subsection{Invariant region}

We begin by establishing an invariant region.

\begin{proposition}
\label{prop: invarea}
Let $V>0$ satisfy the inequality
\begin{equation}\frac{1}{2}\frac{K U}{\sin(KU)}(U-V) \le V
,
\label{eq: Vineq}
\end{equation}
where $U= D + 2V$.
If either one of the following boundary conditions is fulfilled:
\begin{enumerate}[label=\upshape(\roman*)]
\item clamped spline boundary conditions are used, $\norm{\xi_{\text{start}}} \le V$, $\norm{\xi_{\text{end}}} \le V$ and $\norm{\bvec{\xi}} \le V$,
\item natural spline boundary conditions are used and  $\norm{\bvec{\xi}} \le V$,
\end{enumerate}
then we also have $\norm{\bvec{\phi}(\bvec{\xi})} \le V$.
\end{proposition}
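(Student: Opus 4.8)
The plan is to bound each component $\phi_i(\bvec{\xi})$ in norm by $V$, treating the interior indices $1 \le i \le N-1$ and the boundary indices $i \in \{0, N\}$ separately. For an interior index, recall that $\phi_i(\bvec{\xi}) = \phi_i^+(\bvec{\xi}) + \phi_i^-(\bvec{\xi})$ with the two pieces given in \eqref{eq: omegapm}. The key observation is that $\phi_i^+(\bvec{\xi}) = \tfrac14 \omega_i^+(\bvec{\xi}) + \tfrac14 \xi_i$ measures, in the Lie algebra $\lie{m}_{p_i}$, the \emph{difference} between the element $\xi_i^{\text{new},+}$ generating the geodesic from $p_i$ towards $\exp(-\xi_i)\exp(-\xi_{i+1})\cdot p_{i+1}$ and the current $\xi_i$; more precisely I would write $\omega_i^+(\bvec{\xi}) = L_{p_i}(\exp(-\xi_i)\exp(-\xi_{i+1})\cdot p_{i+1})$ as $\alpha_1$ with $\exp(\alpha_1)\cdot p_i$ equal to that point, while $\exp(\alpha_0)\cdot p_i = p_i$ forces $\alpha_0 = 0$ — no, better: I compare $\omega_i^+$ against $-\xi_i$, since $\exp(-\xi_i)\cdot p_i = q_i^-$ and the relevant endpoints are $q_i^-$ versus $q_{i+1}^-$. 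So set $q_0 = q_i^- = \exp(-\xi_i)\cdot p_i$ and $q_1 = \exp(-\xi_i)\exp(-\xi_{i+1})\cdot p_{i+1} = \exp(\omega_i^+(\bvec{\xi}))\cdot p_i$; then $\tfrac14(\omega_i^+(\bvec{\xi}) + \xi_i) = \tfrac14(\omega_i^+ - (-\xi_i))$ is exactly $\tfrac14$ times the quantity bounded by \autoref{lem: distbound}.

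Applying \autoref{lem: distbound} with $\xi_0 = -\xi_i$, $\xi_1 = \omega_i^+(\bvec{\xi})$ gives
\[
  \norm{\phi_i^+(\bvec{\xi})} = \tfrac14 \norm{\omega_i^+(\bvec{\xi}) + \xi_i} \le \tfrac14 \frac{KU'}{\sin(KU')} d(q_i^-, q_1),
\]
where $U' = \tfrac12(\norm{\xi_i} + \norm{\omega_i^+(\bvec{\xi})} + d(q_i^-, q_1))$. Now I bound $d(q_i^-, q_1)$ by the triangle inequality along $q_i^- \to p_i \to p_{i+1} \to q_{i+1}^- \to q_1$: using $d(q_i^-, p_i) = \norm{\xi_i} \le V$, $d(p_i, p_{i+1}) \le D$, $d(p_{i+1}, q_{i+1}^-) = \norm{\xi_{i+1}} \le V$, and the fact that $q_1$ is obtained from $q_{i+1}^-$ by the isometry $\exp(-\xi_i)\exp(-\xi_{i+1})\exp(\xi_{i+1}) = \exp(-\xi_i)$... hmm, this last leg is not quite a translation by a controlled amount, so instead I would bound $d(q_i^-, q_1)$ more cleanly by noting $\norm{\omega_i^+(\bvec{\xi})} = d(q_i^+, q_{i+1}^-) \le D + 2V$ from \eqref{eq:omegabound}, hence $q_1 = \exp(\omega_i^+)\cdot p_i$ has $d(p_i, q_1) = \norm{\omega_i^+} \le D+2V$, so $d(q_i^-, q_1) \le d(q_i^-, p_i) + d(p_i, q_1) \le V + (D+2V)$. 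Actually the sharp route is $d(q_i^-, q_1) \le \norm{\xi_i} + \norm{\omega_i^+} \le V + (D+2V)$, giving $U' \le \tfrac12(V + (D+2V) + (V + D + 2V))$; I should double-check the arithmetic, but in any case $U' \le D + 2V = U$ should hold after the dust settles (intuitively $U$ is calibrated to be the worst-case value of the ``$U$'' appearing in \autoref{lem: distbound}, which is why \eqref{eq: Vineq} is stated with that $U$). Using monotonicity of $x/\sin x$ and $d(q_i^-, q_1) \le U - V$ (the telescoping $\tfrac12$-factor in $U'$ versus the raw distance), I get $\norm{\phi_i^+(\bvec{\xi})} \le \tfrac14 \tfrac{KU}{\sin(KU)}(U - V) \le \tfrac12 V$ by \eqref{eq: Vineq}; the symmetric argument gives $\norm{\phi_i^-(\bvec{\xi})} \le \tfrac12 V$, and the triangle inequality yields $\norm{\phi_i(\bvec{\xi})} \le V$.

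For the boundary terms: under clamped conditions \eqref{eq:clbc}, $\phi_0(\bvec{\xi}) = \xi_{\text{start}}$ and $\phi_N(\bvec{\xi}) = \xi_{\text{end}}$, which are $\le V$ by hypothesis (i). Under natural conditions \eqref{eq:frbc}, $\phi_0(\bvec{\xi}) = \tfrac12 L_{p_0}(\exp(-\xi_1)\cdot p_1)$, whose norm is $\tfrac12 d(p_0, q_1^-) \le \tfrac12(d(p_0, p_1) + \norm{\xi_1}) \le \tfrac12(D + V)$, and I must verify $\tfrac12(D+V) \le V$, i.e. $D \le V$. This should follow from \eqref{eq: Vineq}: since $\tfrac{KU}{\sin(KU)} \ge 1$ always, \eqref{eq: Vineq} forces $\tfrac12(U - V) \le V$, i.e. $U \le 3V$, i.e. $D + 2V \le 3V$, i.e. $D \le V$ — exactly what is needed. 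The same estimate handles $\phi_N$. Taking the maximum over all indices gives $\norm{\bvec{\phi}(\bvec{\xi})} \le V$ in both cases.

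\textbf{Main obstacle.} The delicate point is getting the ``$U$'' in the application of \autoref{lem: distbound} to be dominated by $U = D + 2V$, and simultaneously getting the relevant distance $d(q_0, q_1)$ bounded by $U - V$ rather than just by $U$; this is what makes \eqref{eq: Vineq} exactly the right hypothesis rather than something weaker. I expect this to require carefully choosing \emph{which} pair of points to feed into \autoref{lem: distbound} (comparing $\omega_i^\pm$ against $\mp\xi_i$, so that the common base point contributes only $\norm{\xi_i}$ and the distance telescopes through $p_i$ and $p_{i+1}$) and being careful that each of the three terms $\norm{\xi_0}, \norm{\xi_1}, d(q_0,q_1)$ in the definition of $U$ in \autoref{lem: distbound} is correctly accounted for. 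Everything else — the boundary estimates and the final maximum — is routine once $D \le V$ is extracted from \eqref{eq: Vineq}.
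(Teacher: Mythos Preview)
Your overall strategy matches the paper's: bound $\norm{\phi_i^\pm(\bvec{\xi})}$ by applying \autoref{lem: distbound} to the pair $(-\xi_i,\,\omega_i^\pm)$ at base point $p_i$, then handle the boundary indices directly. The paper packages the interior bound as a separate lemma (\autoref{lem: phibound}) and works instead at base point $q_i^+$ via the adjoint-action isometry $\norm{\omega_i^+ + \xi_i} = \norm{\Ad_{\exp(\xi_i)}\omega_i^+ + \xi_i}$, but this is the same computation up to a global isometry.

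There is, however, a real gap in your execution of the interior estimate. Your bound $d(q_i^-, q_1) \le \norm{\xi_i} + \norm{\omega_i^+} \le V + (D+2V) = D+3V$ is too loose: with it you get $U' \le D+3V > U$, and then neither $U' \le U$ nor $d(q_i^-,q_1)\le U-V$ follows, so \eqref{eq: Vineq} cannot be invoked. The point you missed is that both $q_1 = \exp(-\xi_i)\exp(-\xi_{i+1})\cdot p_{i+1} = \exp(-\xi_i)\cdot q_{i+1}^-$ and $q_i^- = \exp(-\xi_i)\cdot p_i$ are obtained by applying the \emph{same} isometry $\exp(-\xi_i)$, so
\[
  d(q_i^-, q_1) = d(p_i, q_{i+1}^-) \le d(p_i, p_{i+1}) + \norm{\xi_{i+1}} \le D + V = U - V.
\]
This is exactly the bound you hoped for. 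Plugging it in gives
\[
  U' = \tfrac12\bigl(\norm{\xi_i} + \norm{\omega_i^+} + d(q_i^-, q_1)\bigr) \le \tfrac12\bigl(V + (D+2V) + (D+V)\bigr) = D+2V = U,
\]
and then $\norm{\phi_i^+(\bvec{\xi})} \le \tfrac14\frac{KU}{\sin(KU)}(U-V) \le \tfrac12 V$ by \eqref{eq: Vineq}. So your ``dust settles'' hope is correct, but only after this one isometry observation --- which is precisely the crux of the argument, and the paper's adjoint-action detour is just a different packaging of the same isometry.

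Your treatment of the boundary terms is fine and in fact slightly more explicit than the paper's: you extract $D \le V$ from \eqref{eq: Vineq} via $\frac{KU}{\sin(KU)} \ge 1$, whereas the paper simply remarks that the natural boundary estimate holds ``under a weaker condition''.
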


To reach this result, we first prove a few bounds for $\phi_i^+(\bvec{\xi})$.
\begin{lemma}
Let $1\le i\le N-1$, then the following bounds hold,
\[\begin{aligned}
\norm{\phi_i^+(\bvec{\xi})} &\le \frac{1}{4}\frac{KU_i}{\sin(KU_i)} (d(p_i, p_{i+1})+ \norm{\xi_{i+1}}),\\
\norm{\phi_i^-(\bvec{\xi})} &\le \frac{1}{4}\frac{KU_{i-1}}{\sin(KU_{i-1})} (d(p_i, p_{i-1})+ \norm{\xi_{i-1}}),
\end{aligned}
\]
where $U_i = d(p_i, p_{i+1})+ \norm{\xi_{i+1}}+ \norm{\xi_i}$.
\label{lem: phibound}
\end{lemma}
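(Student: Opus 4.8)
The plan is to reduce each of the two bounds to a single application of \autoref{lem: distbound} at the base point $p_i$, after identifying $\omega_i^\pm(\bvec{\vel})$ with the element of $\lie{m}_{p_i}$ generating an explicit point of $M$ from $p_i$.

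First I would unpack the definitions \eqref{eq: omega}--\eqref{eq: omegapm}. Writing the control point $q_{i+1}^- = \exp(-\xi_{i+1})\act p_{i+1}$, the defining property of $\log$ gives $\exp\bigl(\omega_i^+(\bvec{\vel})\bigr)\act p_i = \exp(-\xi_i)\act q_{i+1}^-$, while $4\phi_i^+(\bvec{\vel}) = \omega_i^+(\bvec{\vel}) + \xi_i$. So I apply \autoref{lem: distbound} with $p = p_i$ and the two elements $\xi_0 = -\xi_i$, $\xi_1 = \omega_i^+(\bvec{\vel})$ of $\lie{m}_{p_i}$; the associated points are $q_0 = \exp(-\xi_i)\act p_i = q_i^-$ and $q_1 = \exp(-\xi_i)\act q_{i+1}^-$, and $\norm{\xi_0-\xi_1} = \norm{\omega_i^+(\bvec{\vel})+\xi_i} = 4\norm{\phi_i^+(\bvec{\vel})}$. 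Since $G$ acts by isometries, $d(q_0,q_1) = d(p_i,q_{i+1}^-)$, and the triangle inequality together with $d(p_{i+1},q_{i+1}^-) = \norm{\xi_{i+1}}$ yields $d(q_0,q_1)\le d(p_i,p_{i+1})+\norm{\xi_{i+1}}$.

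Next I would check that the parameter $U = \frac12\bigl(\norm{\xi_i}+\norm{\omega_i^+(\bvec{\vel})}+d(q_0,q_1)\bigr)$ produced by \autoref{lem: distbound} is at most $U_i$. This is the one place that needs a small argument: by a triangle inequality at $p_i$, $\norm{\omega_i^+(\bvec{\vel})} = d(p_i,q_1)\le d(p_i,q_0)+d(q_0,q_1) = \norm{\xi_i}+d(q_0,q_1)$, hence $U\le \norm{\xi_i}+d(q_0,q_1)\le \norm{\xi_i}+d(p_i,p_{i+1})+\norm{\xi_{i+1}} = U_i$. Then \autoref{lem: distbound} gives $4\norm{\phi_i^+(\bvec{\vel})}\le \frac{KU}{\sin(KU)}\,d(q_0,q_1)$, and since $x\mapsto x/\sin x$ is increasing on $[0,\pi)$ and $0\le KU\le KU_i<\pi$ under the smallness hypotheses of \autoref{thm:main_result_convergence}, I may replace $U$ by $U_i$; inserting the bound for $d(q_0,q_1)$ gives the first inequality.

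Finally the bound on $\phi_i^-$ is the mirror image: here $4\phi_i^-(\bvec{\vel}) = -\bigl(\omega_i^-(\bvec{\vel})-\xi_i\bigr)$ and $\exp\bigl(\omega_i^-(\bvec{\vel})\bigr)\act p_i = \exp(\xi_i)\act q_{i-1}^+$, so I apply \autoref{lem: distbound} with $\xi_0 = \xi_i$, $\xi_1 = \omega_i^-(\bvec{\vel})$, $q_0 = q_i^+$, $q_1 = \exp(\xi_i)\act q_{i-1}^+$; now $d(q_0,q_1) = d(p_i,q_{i-1}^+)\le d(p_i,p_{i-1})+\norm{\xi_{i-1}}$ and the same triangle-inequality step bounds the $U$ of \autoref{lem: distbound} by $U_{i-1}$. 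I expect the comparison $U\le U_i$ (resp.\ $U_{i-1}$) to be the only genuinely non-mechanical point; the remaining ingredients --- $G$ acting by isometries and $\norm{\zeta} = d(p,\exp(\zeta)\act p)$ for $\zeta\in\lie{m}_p$, valid by the standing no-conjugate-point assumption --- are routine.
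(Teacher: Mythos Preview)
Your proof is correct and follows essentially the same strategy as the paper: reduce to a single application of \autoref{lem: distbound}, then estimate the resulting parameter $U$ by triangle inequalities to get $U\le U_i$, and use monotonicity of $x/\sin x$. The one cosmetic difference is the choice of base point: the paper first conjugates by $\exp(\xi_i)$ (using $\Ad_{\exp(\xi_i)}\xi_i=\xi_i$ and that the adjoint action is isometric) and applies \autoref{lem: distbound} at $q_i^+$ with the pair $\bigl(\exp(\xi_i)\act\omega_i^+,-\xi_i\bigr)$, obtaining endpoints $q_{i+1}^-$ and $p_i$; you stay at $p_i$ with the pair $(-\xi_i,\omega_i^+)$ and use instead that $\tilde L_{\exp(-\xi_i)}$ is an isometry of $M$ to identify $d(q_0,q_1)=d(p_i,q_{i+1}^-)$. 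Both routes give exactly the same $d(q_0,q_1)$ and the same bound $U\le U_i$, so nothing is gained or lost either way.
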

\begin{proof}
We prove the first inequality.
The proof of the second inequality is entirely symmetrical and therefore omitted.

By definition \eqref{eq: omegapm},
\[\phi_i^+(\bvec{\xi}) = \frac{1}{4}(\omega_i^+(\bvec{\xi})+ \xi_i)
  .
\]
The proof uses \autoref{lem: distbound} to bound $\omega_i^+(\bvec{\xi}) + \xi_i = \omega_i^+(\bvec{\xi})- (-\xi_i)$.

Recall that $G$ acts isometrically on $\lie{m}_p$ by the adjoint action
\[\begin{aligned}
\norm{\omega_i^+(\bvec{\xi})+ \xi_i} &= \norm{ \exp(\xi)\act  (\omega_i^+(\bvec{\xi}) + \xi_i)}\\
&=  \norm{ \exp(\xi_i)\act \omega_i^+(\bvec{\xi})+\xi_i },
\end{aligned}
\]
where $\act \omega_i^+(\bvec{\xi})$ and $\xi_i$ are vectors in
$\lie{m}_{q_i^+}$.
We also have
\[\exp(-\xi_i) \act q_i^+ = p_i\]
and
\[\begin{aligned}
\exp(\exp(\xi_i)\act \omega_i^+(\bvec{\xi}) )\act q_i^+ &= \exp(\xi_i)\exp(\omega_i^+(\bvec{\xi}))\exp(-\xi_i) \act q_i^+\\
&= \exp(\xi_i)\exp(\omega_i^+(\bvec{\xi}))\act p_i  \\
&= \exp(-\xi_{i+1}) \act p_{i+1}\\
&=q_{i+1}^+.
\end{aligned}
\]
By \autoref{lem: distbound}, we have
\[
\norm[\Big]{\exp(\xi_i)\cdot  \omega_i^+(\bvec{\xi})-(-\xi_i)} \le \frac{KT}{\sin(KT)} d(p_i, q_{i+1}^-)
\]
where
\[\begin{aligned}
2T & \coloneqq \norm{\exp(\xi_i) \omega_i} + \norm{\xi_i} + d(p_i, q_{i+1}^-)\\
  &= d(q_i^+, q_{i+1}^-) + \norm{\xi_i} + d(p_i, q_{i+1}^-).
\end{aligned}
\]
From the triangle inequality, we have
\[
\begin{aligned}
d(q_i^+, q_{i+1}^-) &\le d(p_i, p_{i+1}) + \norm{\xi_i} + \norm{\xi_{i+1}}, \\
d(p_i, q_{i+1}^-) &\le d(p_i, p_{i+1})  + \norm{\xi_{i+1}}
.
\end{aligned}
\]
Thus $T\le d(p_i, p_{i+1})+\norm{\xi_{i}}+\norm{\xi_{i+1}}= U_i$, and the claim follows by monotonicity of the function $\frac{x}{\sin x}$.
\end{proof}

\begin{proof}[Proof of {\autoref{prop: invarea}}]
Using \autoref{lem: phibound}, and the inequalities $\norm{\xi_i}\le \norm{\bvec{\xi}}\le V,$ $d(p_i, p_{i+1}) \le D$ and  $U_i \le U$, we get that

\[
\norm{\phi_i(\bvec{\xi})} \le \norm{\phi_i^-(\bvec{\xi})}+ \norm{\phi_i^-(\bvec{\xi})} \le \frac{1}{2}\frac{KU}{\sin(KU)} (D+V)
\]
for all $1\le i\le N-1$.

For the boundary velocities:
\begin{enumerate}[label=\upshape(\roman*)]
\item Clamped spline: $\phi_0(\bvec{\xi}) = \xi_{\text{start}}$, so $\norm{\phi_0(\xi)} \le V$ by our assumption.
\item Natural spline: $\phi_0(\bvec{\xi}) = \frac{1}{2}\pairing{\varpi}{\log_{p_0} (\exp(-\xi_1)\cdot p_1)}_{p_0}$, so
\[\norm{\phi_0(\bvec{\xi})} \le \frac{1}{2} (d(p_0,p_1)+ \norm{\xi_1}) \le \frac{1}{2} (D+V)\le V\]
under a weaker condition on $V$. Similarly for $\phi_N(\bvec{\xi})$.
\end{enumerate}
In conclusion, under the assumption \eqref{eq: Vineq}, we have
\[\norm{\bvec{\phi}(\bvec{\xi})} = \max_{0\le i \le N} \norm{\phi_i(\bvec{\xi})} \le V
\]
and $\setc[\big]{\bvec{\xi}}{\norm{\bvec{\xi}}\le V }$ forms an invariant region.
\end{proof}

\subsection{Contraction}

We now establish that, for $D$ and $\norm{\bvec{\xi}}$ sufficiently small, there
exists an $\alpha \in (0, 1)$ such that, in the operator norm derived from
\eqref{eq: norm}
$\norm{\Tan_{\bvec{\xi}} \bvec{\phi}} \le \alpha$.
We first consider the effect of varying $\xi_{i+1}$  and $\xi_{i-1}$ in $\phi_i(\bvec{\xi})$.
\begin{proposition}
  \label{prop: offdiag}
Let $1\le i \le N-1$ and denote by $\frac{\partial \phi_i(\bvec{\xi})}{\partial \xi_{j}}$
the partial derivative of $\phi_i$ with respect to $\xi_{j}$, i.e.
$\frac{\partial \phi_i(\bvec{\xi})}{\partial \xi_{j}}$ is a linear operator
$\lie{m}_{p_{j}}\to \lie{m}_{p_{i}}$. Then, in the operator norm, 
  \[\begin{aligned}
\norm*{\frac{\partial \phi_i(\bvec{\xi})}{\partial \xi_{i+1}}} &\le \frac{1}{4}
\norm[\big]{(\pi_{p_i}  \dexp_{-\omega_i^+(\bvec{\xi})})^{-1}}
\norm[\big]{\pi_{p_{i+1}} \dexp_{\xi_{i+1}}}\\
&\le \frac{1}{4} \frac{K\norm{\omega_i^+(\bvec{\xi})}}{\sin(K\norm{\omega_i^+(\bvec{\xi})})}\frac{\sinh(K\norm{\xi_{i+1}})}{K\norm{\xi_{i+1}}},
\end{aligned}\]
and
\[\begin{aligned}
\norm*{\frac{\partial \phi_i(\bvec{\xi})}{\partial \xi_{i-1}}} &\le \frac{1}{4} \norm{(\pi_{p_i} \dexp_{-\omega_i^-(\bvec{\xi})})^{-1}} \norm{\pi_{p_{i-1}} \dexp_{\xi_{i-1}}}\\
&\le \frac{1}{4} \frac{K\norm{\omega_i^-(\bvec{\xi})}}{\sin(K\norm{\omega_i^-(\bvec{\xi})})}\frac{\sinh(K\norm{\xi_{i-1}})}{K\norm{\xi_{i-1}}}
.
\end{aligned}\]
\end{proposition}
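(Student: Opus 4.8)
The plan is to reduce the two estimates in \autoref{prop: offdiag} to bounds on the differentials of the auxiliary maps $\omega_i^{\pm}$. By \eqref{eq: omega} and \eqref{eq: omegapm} we have $\phi_i(\bvec{\xi}) = \frac{1}{2}\xi_i + \frac{1}{4}\omega_i^{+}(\bvec{\xi}) - \frac{1}{4}\omega_i^{-}(\bvec{\xi})$, where $\omega_i^{+}$ depends only on $(\xi_i,\xi_{i+1})$ and $\omega_i^{-}$ only on $(\xi_i,\xi_{i-1})$; hence $\frac{\partial\phi_i}{\partial\xi_{i+1}} = \frac{1}{4}\frac{\partial\omega_i^{+}}{\partial\xi_{i+1}}$ and $\frac{\partial\phi_i}{\partial\xi_{i-1}} = -\frac{1}{4}\frac{\partial\omega_i^{-}}{\partial\xi_{i-1}}$, so it suffices to bound these operators $\lie{m}_{p_{i\pm1}}\to\lie{m}_{p_i}$.

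I would treat $\frac{\partial\omega_i^{+}}{\partial\xi_{i+1}}$ by implicit differentiation. From the definitions of $L_{p_i}$ and of the logarithm, $\omega_i^{+}(\bvec{\xi})\in\lie{m}_{p_i}$ is the unique small element satisfying
\begin{equation*}
  \exp\bigl(\omega_i^{+}(\bvec{\xi})\bigr)\act p_i = \exp(-\xi_i)\exp(-\xi_{i+1})\act p_{i+1} ,
\end{equation*}
and this relation determines $\omega_i^{+}$ as a smooth function of $\xi_{i+1}$, since $\omega\mapsto\exp(\omega)\act p_i$ is a local diffeomorphism near $\omega_i^{+}$ — its derivative is invertible precisely when $\pi_{p_i}\dexp_{-\omega_i^{+}}$ is, which, by the corollary of \autoref{prop:adeven}, holds as long as $K\norm{\omega_i^{+}}<\pi$. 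Differentiating the displayed identity in a direction $\delta\in\lie{m}_{p_{i+1}}$ and writing $u\coloneqq\frac{\partial\omega_i^{+}}{\partial\xi_{i+1}}(\delta)\in\lie{m}_{p_i}$: by \autoref{prop:expident} applied to $\omega\mapsto\exp(\omega)\act p_i$ (at $\omega_i^{+}$), the left-hand side has derivative $\exp(\omega_i^{+})\act\bigl((\pi_{p_i}\dexp_{-\omega_i^{+}})(u)\act p_i\bigr)$; applying \autoref{prop:expident} to $\eta\mapsto\exp(\eta)\act p_{i+1}$ at the point $-\xi_{i+1}$ and pushing forward by $\exp(-\xi_i)$, the right-hand side has derivative $-\exp(-\xi_i)\exp(-\xi_{i+1})\act\bigl((\pi_{p_{i+1}}\dexp_{\xi_{i+1}})(\delta)\act p_{i+1}\bigr)$, where I used $\dexp_{-(-\xi_{i+1})}=\dexp_{\xi_{i+1}}$. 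Both are tangent vectors at the common point $\exp(\omega_i^{+})\act p_i$.

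The crux is to pass to norms. Since $M$ is a Riemannian symmetric space, $G$ acts by isometries and each infinitesimal action $\lie{m}_q\to\Tan_qM$ is a linear isometry; equating the norms of the two tangent vectors above therefore yields $\norm{(\pi_{p_i}\dexp_{-\omega_i^{+}})(u)} = \norm{(\pi_{p_{i+1}}\dexp_{\xi_{i+1}})(\delta)}$. Since $\pi_{p_i}\dexp_{-\omega_i^{+}}$ is invertible, $\norm{u}\le\norm{(\pi_{p_i}\dexp_{-\omega_i^{+}})^{-1}}\,\norm{(\pi_{p_{i+1}}\dexp_{\xi_{i+1}})(\delta)}$, and taking the supremum over $\norm{\delta}\le 1$ gives
\begin{equation*}
  \norm*{\frac{\partial\phi_i}{\partial\xi_{i+1}}} = \frac{1}{4}\,\norm*{\frac{\partial\omega_i^{+}}{\partial\xi_{i+1}}} \le \frac{1}{4}\,\norm*{(\pi_{p_i}\dexp_{-\omega_i^{+}})^{-1}}\,\norm*{\pi_{p_{i+1}}\dexp_{\xi_{i+1}}} ,
\end{equation*}
which is the first inequality; the second then follows by substituting the two scalar bounds of \autoref{lem: dexpbound}. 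For $\frac{\partial\phi_i}{\partial\xi_{i-1}}$ one runs the same argument from $\exp(\omega_i^{-}(\bvec{\xi}))\act p_i = \exp(\xi_i)\exp(\xi_{i-1})\act p_{i-1}$; the only difference is that differentiating the inner exponential now produces $\pi_{p_{i-1}}\dexp_{-\xi_{i-1}}$ in place of $\pi_{p_{i+1}}\dexp_{\xi_{i+1}}$, and this equals $\pi_{p_{i-1}}\dexp_{\xi_{i-1}}$ on $\lie{m}_{p_{i-1}}$ because $\sinh(\ad_\xi)/\ad_\xi$ is even (\autoref{prop:adeven}), so the bound is unchanged.

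The only genuinely delicate points are: (i) the differentiability of $\omega_i^{\pm}$ and the invertibility of $\pi_{p_i}\dexp_{-\omega_i^{\pm}}$, where the implicit smallness $K\norm{\omega_i^{\pm}}<\pi$ enters and makes the factor $\sin(K\norm{\omega_i^{\pm}})$ in the denominator legitimate; and (ii) the observation that the isometric action of $G$ gives only equality of the \emph{norms} of the two tangent vectors, not of the vectors themselves — which is precisely what the argument uses. Beyond tracking which tangent space (or copy of a Lie algebra) each quantity lives in, I do not anticipate a substantive obstacle.
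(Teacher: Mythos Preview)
Your proposal is correct and follows essentially the same route as the paper: reduce to $\frac{1}{4}\,\partial\omega_i^{\pm}/\partial\xi_{i\pm1}$, differentiate the defining relation $\exp(\omega_i^{\pm})\act p_i = \ldots$ implicitly, pass to norms via the isometric $G$-action to obtain $\norm{(\pi_{p_i}\dexp_{-\omega_i^{\pm}})u} = \norm{(\pi_{p_{i\pm1}}\dexp_{\pm\xi_{i\pm1}})\delta}$, invert, and finish with \autoref{lem: dexpbound}. Your observation that the $\xi_{i-1}$ case yields $\pi_{p_{i-1}}\dexp_{-\xi_{i-1}}$, which then equals $\pi_{p_{i-1}}\dexp_{\xi_{i-1}}$ by the evenness in \autoref{prop:adeven}, is in fact slightly more explicit than the paper's ``entirely symmetrical'' remark.
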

\begin{proof}
We only prove the first claim. The proof of the second is entirely symmetrical.
The term $\xi_{i+1}$ only enters $\phi_i(\bvec{\xi})$ through the term $\frac{1}{4}\omega_i^+(\bvec{\xi})$, so
\begin{equation}
  \frac{\partial \phi_i(\bvec{\xi})}{\partial \xi_{i+1}} = \frac{1}{4}
  \frac{\partial \omega_i^+(\bvec{\xi})}{\partial \xi_{i+1}}
  .
\label{eq: propoffdiag1}
\end{equation}

By the definition of $\omega_i^+(\bvec{\xi})$, we have
\[\exp(\omega_i^+(\bvec{\xi})) \act p_i = \exp(-\xi_i) \exp(-\xi_{i+1})\act p_{i+1}
  .
\]

Differentiating implicitly on both sides, then acting from the left by $\exp(-\omega_i^+(\bvec{\xi}))$, we get
\[
  \begin{split}
    &(\dexp_{-\omega_i^+(\bvec{\xi})}\delta \omega_i^+) \act p_i = \\
    &\qquad -\exp(-\omega_i^+(\bvec{\xi})) \exp(-\xi_i) \exp(-\xi_{i+1}) \act (\dexp_{\xi_{i+1}}\delta \xi_{i+1}) \act {p_{i+1}}
    ,
  \end{split}
\]
where $\delta \omega_i^+ = \frac{\partial \omega_i^+(\bvec{\xi})}{\partial
  \xi_{i+1}} \delta \xi_{i+1}$.
Taking norms on both sides, and using that the group action is isometric, we get
\[ \norm{(\dexp_{-\omega_i^+(\bvec{\xi})}\delta \omega_i^+) \act p_i} = \norm{(\dexp_{\xi_{i+1}}\delta \xi_{i+1}) \act {p_{i+1}}},\]
and
\[
  \norm{\pi_{p_i} \dexp_{-\omega_i^+(\bvec{\xi})} \delta \omega_i^+} = \norm{\pi_{p_{i+1}}\dexp_{\xi_{i+1}} \delta \xi_{i+1}}.
\]
It follows that
\[
\norm{\delta \omega_i^+} \le \norm{(\pi_{p_i} \dexp_{-\omega_i^+(\bvec{\xi})})^{-1}} \norm{\pi_{p_{i+1}} \dexp_{\xi_{i+1}}} \norm{\delta \xi_{i+1}},
\]
thus
\[\norm*{\frac{\partial \omega_i^+(\bvec{\xi})}{\partial
  \xi_{i+1}}} \le \norm{(\pi_{p_i} \dexp_{-\omega_i^+(\bvec{\xi})})^{-1}}
\norm{\pi_{p_{i+1}} \dexp_{\xi_{i+1}}}
.
\]
The claims in the proposition follow from \eqref{eq: propoffdiag1}  and \autoref{lem: dexpbound}.
\end{proof}

We note that we can also use a Taylor expansion of the right hand side to get the asymptotic bound
\[
\norm*{\frac{\partial \phi_i(\bvec{\xi})}{\partial \xi_{i+1}}} \le \frac{1}{4}+ \frac{1}{24} K^2 \norm{\omega_i^+(\bvec{\xi})}^2 + \frac{1}{24} K^2\norm{\xi_i}^2+ \mathcal{O}(K^4\norm{\omega_i^+(\bvec{\xi})}^4, K^4\norm{\bvec{\xi}}^4)
    .
\]

  To bound $\frac{\partial \phi_i(\bvec{\xi})}{\partial \xi_i}$, it is again advantageous to do the splitting
  $\phi_i(\bvec{\xi})= \phi_i^+(\bvec{\xi})+ \phi_i^-(\bvec{\xi})$, and consider each term separately.

  While it is possible to bound $\norm*{\frac{\partial \phi_i^+(\bvec{\xi})}{\partial \xi_i}}$ by trigonometric and hyperbolic functions of $K \norm{\xi_i}$ and $K \norm{\omega_i^+(\bvec{\xi})}$, the required calculations are lengthy, and we restrict ourself to an asymptotic bound.

  \begin{proposition}
    \label{prop: ondiag}
    Let $1\le i \le N-1$ and denote by $\frac{\partial \phi_i^+(\bvec{\xi})}{\partial \xi_i}\from
  \lie{m}_{p_i}\to \lie{m}_{p_i}$
  the partial derivative of $\phi_i$ with respect to $\xi_i$.
  In the operator norm
  \[
    \begin{aligned}
    \norm*{\frac{\partial \phi_i^+(\bvec{\xi})}{\partial \xi_i}}
    \le \frac{1}{4} K^2\left(\frac{1}{3} \norm{\omega_i^+(\bvec{\xi})}^2 + \frac{1}{6} \norm{\xi_i}^2 + \frac{1}{2} \norm{\xi_i}\norm{\omega_i^+(\bvec{\xi})}\right) \\
    + \mathcal{O}(K^4\norm{\omega_i^+(\bvec{\xi})}^4, K^4\norm{\xi_i}^4)
    \end{aligned}
  \]
  and
  \[
    \begin{aligned}
      \norm*{\frac{\partial \phi_i^-(\bvec{\xi})}{\partial \xi_i}} \le \frac{1}{4} K^2\left(\frac{1}{3} \norm{\omega_i^-(\bvec{\xi})}^2 + \frac{1}{6} \norm{\xi_i}^2 + \frac{1}{2} \norm{\xi_i}\norm{\omega_i^-(\bvec{\xi})}\right) \\
      + \mathcal{O}(K^4\norm{\omega_i^-(\bvec{\xi})}^4, K^4\norm{\xi_i}^4)
    \end{aligned}
  \]
  \end{proposition}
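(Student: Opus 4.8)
The proof strategy is the same for both displayed bounds, so I would prove the bound on $\norm{\partial\phi_i^+/\partial\xi_i}$ in detail and remark that the estimate for $\phi_i^-$ follows by the symmetry $\xi_i\mapsto-\xi_i$, $\omega_i^+\mapsto\omega_i^-$, $p_{i+1}\mapsto p_{i-1}$ that was already exploited in \autoref{lem: phibound} and \autoref{prop: offdiag}. Recall from \eqref{eq: omegapm} that $\phi_i^+(\bvec{\xi}) = \tfrac14\omega_i^+(\bvec{\xi}) + \tfrac14\xi_i$, so $\partial\phi_i^+/\partial\xi_i = \tfrac14\bigl(\partial\omega_i^+/\partial\xi_i + \id\bigr)$. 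The whole point is therefore to show that $\partial\omega_i^+/\partial\xi_i + \id$ is quadratically small in $K\norm{\xi_i}$ and $K\norm{\omega_i^+}$; the leading term $\id$ must cancel.

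**Key steps.**
First I would differentiate the defining relation $\exp(\omega_i^+(\bvec{\xi}))\act p_i = \exp(-\xi_i)\exp(-\xi_{i+1})\act p_{i+1}$ with respect to $\xi_i$ (holding $\xi_{i+1}$ fixed), exactly as in the proof of \autoref{prop: offdiag}, but now there are \emph{two} occurrences of $\xi_i$: one in $\omega_i^+$ on the left and one explicitly on the right. Using \autoref{prop:expident} (i.e. \autoref{prop:adeven}) to pass to $\lie{m}_{p_i}$ and acting by $\exp(-\xi_i)$ on the left to bring everything over $p_i$, this yields an implicit equation of the schematic form
\begin{equation*}
\pi_{p_i}\dexp_{-\omega_i^+}\Bigl(\tfrac{\partial\omega_i^+}{\partial\xi_i}\delta\xi_i\Bigr) = -\,\Ad_{\exp(-\xi_i)}\,\pi\dexp_{-\xi_i}(\delta\xi_i) \;+\;(\text{term from }\xi_i\text{ on the right}),
\end{equation*}
where each unlabelled $\pi$ is the appropriate canonical projection. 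Next I would solve for $\partial\omega_i^+/\partial\xi_i$ by applying $(\pi_{p_i}\dexp_{-\omega_i^+})^{-1} = \ad_{\omega_i^+}/\sinh(\ad_{\omega_i^+})$ (the corollary to \autoref{prop:adeven}), add $\id$, and then Taylor-expand every operator-valued function of $\ad$ to second order: $\dexp_{\pm\xi} = \id \pm \tfrac12\ad_\xi + \tfrac16\ad_\xi^2+\dots$, $\ad_\omega/\sinh\ad_\omega = \id - \tfrac16\ad_\omega^2+\dots$, and $\Ad_{\exp(-\xi_i)} = \id - \ad_{\xi_i} + \tfrac12\ad_{\xi_i}^2+\dots$. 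The zeroth-order terms are arranged so that $\id$ cancels; the first-order terms in $\ad_{\xi_i}$ must also cancel (they do, because the two $\xi_i$-dependencies enter with opposite signs in the original relation — this is the content that makes the bound quadratic rather than linear); what survives is a sum of compositions of at most two $\ad$'s, each $\ad_\xi$ or $\ad_\omega$. Finally I would take operator norms, use $\norm{\ad_\xi} \le K\norm{\xi}$ (which is precisely the definition \eqref{eq:defcurvature} of $K$, read for a single bracket, together with its iteration for double brackets $\norm{[\xi,[\eta,\zeta]]}\le K^2\norm\xi\norm\eta\norm\zeta$), and collect the three quadratic monomials $\norm{\omega_i^+}^2$, $\norm{\xi_i}^2$, $\norm{\xi_i}\norm{\omega_i^+}$ with their coefficients $\tfrac13,\tfrac16,\tfrac12$, absorbing all higher brackets into $\mathcal{O}(K^4\norm{\omega_i^+}^4, K^4\norm{\xi_i}^4)$. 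Multiplying by the prefactor $\tfrac14$ gives the stated inequality.

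**Main obstacle.**
The delicate point is the bookkeeping of the Taylor expansion: one has to verify that after solving for $\partial\omega_i^+/\partial\xi_i$ and adding $\id$, both the constant term and the linear-in-$\ad$ term vanish identically, and that the surviving quadratic part has exactly the coefficients $\tfrac13\ad_\omega^2 + \tfrac16\ad_\xi^2 + \tfrac12(\text{symmetrized }\ad_\xi\ad_\omega)$ in operator norm. This is a finite but genuinely error-prone computation with non-commuting operators, which is why the proposition is stated only asymptotically. A secondary subtlety is that some intermediate operators (e.g. $\Ad_{\exp(-\xi_i)}$ acting between $\lie{m}_{p_i}$ and itself after the isometric identifications) mix $\lie{m}_{p_i}$ and $\lie{h}_{p_i}$ components, so one must be careful to project with $\pi_{p_i}$ at the right moments, using the parity property from \autoref{prop:adeven} ($\ad^{\text{even}}$ preserves $\lie{m}_p$) to see that the odd-order cross terms drop out after projection. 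I would organize the computation so that this parity argument is invoked once, cleanly, rather than tracked term by term.
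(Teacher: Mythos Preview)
Your overall strategy is exactly the paper's: differentiate the defining relation for $\omega_i^+$ implicitly with respect to $\xi_i$, solve for $\partial\omega_i^+/\partial\xi_i$, add the identity, Taylor-expand, and observe the leading cancellation. However, two details of your plan are wrong in ways that would derail the computation.

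First, your implicit differentiation is garbled. You propose to ``act by $\exp(-\xi_i)$ on the left to bring everything over $p_i$'', giving an $\Ad_{\exp(-\xi_i)}$ factor plus a second ``term from $\xi_i$ on the right''. But $\exp(-\xi_i)$ does \emph{not} send the point $\exp(\omega_i^+)\cdot p_i$ back to $p_i$; the element that does is $\exp(-\omega_i^+)$. The paper first isolates this step as a lemma: differentiating $\exp(\omega_i^+)\cdot p_i = \exp(-\xi_i)\cdot q_{i+1}^-$ (where $q_{i+1}^-$ is independent of $\xi_i$) and then acting by $\exp(-\omega_i^+)$ gives the single clean formula
\[
\frac{\partial\omega_i^+}{\partial\xi_i}\,\delta\xi_i
= -\bigl(\pi_{p_i}\dexp_{-\omega_i^+}\bigr)^{-1}\,\pi_{p_i}\,\Ad_{\exp(-\omega_i^+)}\,\dexp_{-\xi_i}\,\delta\xi_i .
\]
There is no second term, and the $\Ad$ is by $\exp(-\omega_i^+)$, not $\exp(-\xi_i)$. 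This matters: the coefficient $\tfrac13$ in front of $\norm{\omega_i^+}^2$ arises precisely as $\tfrac12-\tfrac16$, where the $\tfrac12$ comes from the second-order term of $\Ad_{\exp(-\omega_i^+)}$. With $\Ad_{\exp(-\xi_i)}$ you would not recover it.

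Second, your explanation of why the first-order terms disappear is incorrect. You attribute it to ``the two $\xi_i$-dependencies entering with opposite signs''. In fact there is no such pairwise sign cancellation at first order: the linear terms $-\ad_{\omega_i^+}\delta\xi_i$ and $-\tfrac12\ad_{\xi_i}\delta\xi_i$ simply lie in $\lie h_{p_i}$ (since $[\lie m_{p_i},\lie m_{p_i}]\subset\lie h_{p_i}$) and are killed by $\pi_{p_i}$. What you flag as a ``secondary subtlety'' --- the parity argument from \autoref{prop:adeven} --- is in fact the sole mechanism by which all odd-order monomials drop out. The only genuine algebraic cancellation is at order zero: $\delta\xi_i - \delta\xi_i = 0$.

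Fix these two points and your plan coincides with the paper's proof.
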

  Again, we will only prove the first claim, the proof of the second is entirely symmetrical.
  We first prove the following Lemma on the derivative of $\omega_i^+$.
  \begin{lemma}
    \label{prop:bounderomaga}
    For $1\le i\le N-1$, the following equality holds
  \[
  \frac{\partial \omega_i^+(\bvec{\xi})}{\partial \xi_i} \delta \xi_i= - (\pi_{p_i} \dexp_{-\omega_i^+(\bvec{\xi})})^{-1}  \pi_{p_i}  \Ad_{\exp(-\omega_i^+(\bvec{\xi}))}  \dexp_{-\xi_i}\delta \xi_i.
  \]
  \end{lemma}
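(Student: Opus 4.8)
The plan is to differentiate the equation that defines $\omega_i^+(\bvec{\xi})$, namely
\[
  \exp(\omega_i^+(\bvec{\xi})) \act p_i = \exp(-\xi_i)\exp(-\xi_{i+1}) \act p_{i+1}
  ,
\]
with respect to $\xi_i$ while holding $\xi_{i+1}$ fixed, and then to solve the resulting relation for $\frac{\partial \omega_i^+(\bvec{\xi})}{\partial \xi_i}$. Write $\delta\omega_i^+ = \frac{\partial \omega_i^+(\bvec{\xi})}{\partial \xi_i}\,\delta\xi_i$ for the directional derivative in the direction $\delta\xi_i$. First I would differentiate the left-hand side with \autoref{prop:expident}: since $\omega_i^+\in\lie{m}_{p_i}$, its contribution is $\exp(\omega_i^+)\act\big((\pi_{p_i}\dexp_{-\omega_i^+}\delta\omega_i^+)\act p_i\big)$, where one may drop $\pi_{p_i}$ because $\eta\act p_i = \pi_{p_i}(\eta)\act p_i$ for every $\eta\in\lie{g}$.

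For the right-hand side, I would use that $\exp(-\xi_i - s\,\delta\xi_i) = \exp(-\xi_i)\,k(s)$ with $k(0)=e$ and $\dot k(0) = -\dexp_{\xi_i}\delta\xi_i$, which is exactly the trivialized derivative of $\exp$ recalled in \autoref{sec: lemmata}. Hence the derivative of the right-hand side equals $\exp(-\xi_i)\act\big((-\dexp_{\xi_i}\delta\xi_i)\act y\big)$ with $y \coloneqq \exp(-\xi_{i+1})\act p_{i+1}$. Now I would use the defining equation to rewrite $y = \exp(\xi_i)\exp(\omega_i^+)\act p_i = g\act p_i$ with $g \coloneqq \exp(\xi_i)\exp(\omega_i^+)$, and transport the infinitesimal action back to $p_i$ via the equivariance identity $\zeta\act(g\act p_i) = g\act\big((\Ad_{g^{-1}}\zeta)\act p_i\big)$. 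Since $\exp(-\xi_i)\,g = \exp(\omega_i^+)$ and $\Ad_{g^{-1}} = \Ad_{\exp(-\omega_i^+)}\Ad_{\exp(-\xi_i)}$, and since the elementary identity $\Ad_{\exp(-\xi_i)}\dexp_{\xi_i} = \dexp_{-\xi_i}$ holds (differentiate $\exp(\xi_i)\exp(-\xi_i)=\id$, or compare the power series of $\dexp$ and of $\Ad_{\exp}$), the right-hand derivative becomes $\exp(\omega_i^+)\act\big((-\Ad_{\exp(-\omega_i^+)}\dexp_{-\xi_i}\delta\xi_i)\act p_i\big)$.

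Equating the two expressions and cancelling the common prolonged action $\exp(\omega_i^+)\act$ (a linear isomorphism $T_{p_i}M\to T_{q_i^+}M$), I obtain an equality of tangent vectors at $p_i$; since $\eta\mapsto\eta\act p_i$ has kernel $\lie{h}_{p_i}$, this forces $\pi_{p_i}\dexp_{-\omega_i^+}\delta\omega_i^+ = -\pi_{p_i}\Ad_{\exp(-\omega_i^+)}\dexp_{-\xi_i}\delta\xi_i$ in $\lie{m}_{p_i}$. As $\omega_i^+$ is close to zero, $\pi_{p_i}\dexp_{-\omega_i^+}$ is invertible on $\lie{m}_{p_i}$ by the corollary of \autoref{prop:adeven}; inverting it and recalling $\delta\omega_i^+ = \frac{\partial \omega_i^+(\bvec{\xi})}{\partial \xi_i}\delta\xi_i$ yields the claimed formula.

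The only genuinely delicate part is bookkeeping: keeping the three meanings of ``$\act$'' separate (the prolonged action on $TM$, the infinitesimal action $\lie{g}\to T_{p_i}M$, and the adjoint action on $\lie{g}$), tracking the signs through the two places where a minus enters ($\dot k(0)$ and the $\dexp_{\xi_i}\to\dexp_{-\xi_i}$ conversion), and remembering that the pre-projection equality only lives in $T_{p_i}M$, so the conclusion is an equality of the $\pi_{p_i}$-projections. The one non-routine ingredient, $\Ad_{\exp(-\xi)}\dexp_{\xi} = \dexp_{-\xi}$, is a one-line power-series computation.
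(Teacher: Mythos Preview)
Your argument is correct and follows essentially the same route as the paper: both differentiate the defining relation $\exp(\omega_i^+)\act p_i = \exp(-\xi_i)\act q_{i+1}^-$, transport everything to an equality of tangent vectors at $p_i$, project via $\pi_{p_i}$, and invert $\pi_{p_i}\dexp_{-\omega_i^+}$. The only cosmetic difference is that the paper uses the \emph{right} trivialization of $\exp$ on the right-hand side (obtaining $-\dexp_{-\xi_i}\delta\xi_i$ directly, acting from the left on $\exp(\omega_i^+)\act p_i$), whereas you use the \emph{left} trivialization and then invoke the identity $\Ad_{\exp(-\xi_i)}\dexp_{\xi_i}=\dexp_{-\xi_i}$ to reach the same expression; this is a one-line detour, not a different method.
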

  \begin{proof}
  We use that
  \begin{equation}
  \label{eq: omegaeq}
  \exp(\omega_i^+(\bvec{\xi})) \act p_i = \exp(-\xi_i) \act q_{i+1}^-
  .
  \end{equation}
  By differentiating implicitly, we get
  \[\begin{aligned}
  \exp(\omega_i^+(\bvec{\xi})) \act (\dexp_{-\omega_i^+(\bvec{\xi})} \delta \omega_i^+) \act p_i   &= - (\dexp_{-\xi_i} \delta \xi_i) \act \exp(-\xi_i) \act q_{i+1}^-
  \\&= - (\dexp_{-\xi_i} \delta \xi_i) \act \exp(\omega_i^+(\bvec{\xi})) \act p_i,
  \end{aligned}
  \]
  where the second equality uses \eqref{eq: omegaeq}.
  Acting from the left with $\exp(-\omega_i^+(\bvec{\xi}))$, we get
  \[
  (\dexp_{-\omega_i^+(\bvec{\xi})}\delta \omega_i^+(\bvec{\xi})) \act p_i   = - (\Ad_{\exp(-\omega_i^+(\bvec{\xi}))}  \dexp_{-\xi_i} \delta \xi)  \act  p_i
  .
  \]
  Applying the connection $\varpi$ to both sides, we get
  \[
  \left(\pi_{p_i}\dexp_{-\omega_i^+(\bvec{\xi})}\right) \delta \omega_i^+   = - \left(\pi_{p_i} \Ad_{\exp(-\omega_i^+(\bvec{\xi}))}  \dexp_{-\xi_i} \right)\delta \xi_i,
  \]
  where the bracketed expressions are linear operators $\lie{m}_{p_i} \to \lie{m}_{p_i}$.
  Now, the map ${\pi_{p_i}\dexp_{-\omega_i^+(\bvec{\xi})}} \from \lie{m}_{p_i} \to \lie{m}_{p_i}$ is invertible by \autoref{prop:adeven}, and the claim follows.
  \end{proof}

  We now proceed with the proof of \autoref{prop: ondiag}.
  \begin{proof}[Proof of \autoref{prop: ondiag}]
  From the definition of $\phi_i^+$,
  \[
  \frac{\partial \phi_i^+(\bvec{\xi})}{\partial \xi_i} (\delta \xi_i)= \frac{1}{4} \left(\delta \xi_i+\frac{\partial \omega_i^+(\bvec{\xi})}{\partial \xi_i}(\delta \xi_i) \right)
  \]

  Using \autoref{prop:bounderomaga}, we have
  \begin{equation}
  \frac{\partial \phi_i^+(\bvec{\xi})}{\partial \xi_i} \delta \xi_i = \frac{1}{4} \left (\delta \xi_i- (\pi_{p_i} \dexp_{-\omega_i^+(\bvec{\xi})})^{-1}  \pi_{p_i} \Ad_{\exp(-\omega_i^+(\bvec{\xi}))} \dexp_{-\xi_i}\delta \xi_i \right)
  .
  \end{equation}
  Using \autoref{prop:adeven}, the series expansions for $\Ad_{\exp(-\omega_i^+(\bvec{\xi}))} = \exp(-\ad_{\omega_i^+})$, and $\dexp_{-\xi_i}$,
  the expression on the right hand side can be expanded as an infinite series in $\ad_{\omega_i^+(\bvec{\xi})}$ and $\ad_{\xi_i}$ applied to $\delta \xi_i$.
  (Recall that, as we saw in the proof of \autoref{prop:adeven}, the effect of $\pi_{p_i}$ is to cancel out Lie monomials of even degree.)
  The crucial point is that there is a cancellation in the leading term so that, neglecting terms of fourth order and higher,
  \begin{equation}
    \begin{split}
      &\delta \xi_i- (\pi_{p_i}  \dexp_{-\omega_i^+(\bvec{\xi})})^{-1}  \pi_{p_i}  \Ad_{\exp(-\omega_i^+(\bvec{\xi}))}  \dexp_{-\xi_i} \delta \xi_i  =\\
      &\quad -\frac{1}{3} \ad_{\omega_i^+(\bvec{\xi})}^2(\delta \xi_i) - \frac{1}{6} \ad_{\xi_i}^2(\delta \xi_i) - \frac{1}{2}\ad_{\omega_i^+(\bvec{\xi})}\ad_{\xi_i}(\delta \xi_i) \\
      &\qquad + \mathcal{O}\paren[\Big]{K^4\norm{\omega_i^+(\bvec{\xi})}^4\norm{\delta \xi_i}, K^4\norm{\xi_i}^4\norm{\delta \xi_i}}
  .
  \end{split}
  \end{equation}

  We thus get
  \[
  \frac{\partial \phi_i^+(\bvec{\xi})}{\partial \xi_i} = -\frac{1}{4} \left(\frac{1}{3} \ad_{\omega_i^+(\bvec{\xi})}^2 + \frac{1}{6} \ad_{\xi_i}^2 + \frac{1}{2}\ad_{\omega_i^+(\bvec{\xi})}\ad_{\xi_i} + \mathcal{O}(K^4\norm{\omega_i^+(\bvec{\xi})}, K^4\norm{\xi_i}^4)\right)
  .
  \]
  The proposition follows by taking norms.
  \end{proof}

  It is also possible to bound the derivatives of the boundary condition functions.
  \begin{proposition}
  \label{prop: boundary}
  \mbox{}
  \begin{enumerate}[label=\upshape(\roman*)]
  \item
    (Clamped splines) Let $\phi_0$ and $\phi_N$ be as in
    \eqref{eq:clbc}. Then $\frac{\partial \phi_0(\bvec{\xi)})}{\partial \xi_1}=0$, $\frac{\partial \phi_N(\bvec{\xi})}{\partial \xi_{N-1}}=0$.
  \item
   (Natural splines) Let $\phi_0$ and $\phi_N$ be as on \eqref{eq:frbc}. Then
   \[\norm*{\frac{\partial \phi_0(\bvec{\xi})}{\partial \xi_1}} \le \frac{1}{2} + \frac{1}{3}
    K^2\norm{\phi_0}^2 + \frac{1}{12}K^2 \norm{\xi_1}^2 + \mathcal{O}(K^4
    \norm{\bvec{\xi}}^4)\]
  and similar for $\norm*{\frac{\partial \phi_N(\bvec{\xi})}{\partial \xi_{N-1}}}$.
\end{enumerate}

  \end{proposition}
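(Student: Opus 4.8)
Part (i) is immediate: under clamped boundary conditions, \eqref{eq:clbc} defines $\phi_0$ and $\phi_N$ as \emph{constant} maps (equal to the prescribed $\xi_{\text{start}}$, $\xi_{\text{end}}$), so every partial derivative vanishes; in particular $\frac{\partial\phi_0}{\partial\xi_1}=0$ and $\frac{\partial\phi_N}{\partial\xi_{N-1}}=0$. All the content is in part (ii), and the plan there is to recognise $\phi_0$ and $\phi_N$ as boundary instances of the maps $\omega_i^{\pm}$ already analysed, and then re-run the expansion from the proof of \autoref{prop: ondiag}. Concretely, with $L_{p}(q)=\pairing{\varpi}{\log_{p}(q)}_{p}$ as before, the natural boundary condition \eqref{eq:frbc} reads $\phi_0(\bvec{\xi})=\frac{1}{2}L_{p_0}(\exp(-\xi_1)\act p_1)$, which is exactly $\frac{1}{2}\omega_0^+(\bvec{\xi})$ in the notation of \eqref{eq: omega} once one sets the (non-existent) variable $\xi_0$ to $0$; symmetrically $\phi_N(\bvec{\xi})=-\frac{1}{2}\omega_N^-(\bvec{\xi})$ with $\xi_N=0$. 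Hence $\frac{\partial\phi_0}{\partial\xi_1}=\frac{1}{2}\frac{\partial\omega_0^+}{\partial\xi_1}$ and $\frac{\partial\phi_N}{\partial\xi_{N-1}}=-\frac{1}{2}\frac{\partial\omega_N^-}{\partial\xi_{N-1}}$.

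Next I would compute $\frac{\partial\omega_0^+}{\partial\xi_1}$ in closed form, exactly mimicking the proof of \autoref{prop:bounderomaga}. Differentiating the defining identity $\exp(\omega_0^+(\bvec{\xi}))\act p_0=\exp(-\xi_1)\act p_1$ implicitly in $\xi_1$, acting from the left by $\exp(-\omega_0^+(\bvec{\xi}))$, using $\exp(-\omega_0^+(\bvec{\xi}))\exp(-\xi_1)\act p_1=p_0$ together with the equivariance $g\act(\eta\act q)=(\Ad_g\eta)\act(g\act q)$ of the infinitesimal action, and finally applying $\varpi$ at $p_0$, one obtains
\[
\frac{\partial\omega_0^+(\bvec{\xi})}{\partial\xi_1}\,\delta\xi_1
 = -\,\bigl(\pi_{p_0}\dexp_{-\omega_0^+(\bvec{\xi})}\bigr)^{-1}\,\pi_{p_0}\,\Ad_{\exp(-\omega_0^+(\bvec{\xi}))}\,\dexp_{-\xi_1}\,\delta\xi_1 ,
\]
i.e.\ the $i=0$ case of \autoref{prop:bounderomaga}; the absence of a $\dexp_{-\xi_0}$ factor (because $\xi_0=0$) is precisely what makes this off-diagonal derivative coincide in form with the on-diagonal one.

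The last step is the expansion, carried out exactly as in the proof of \autoref{prop: ondiag}: using $\Ad_{\exp(-\omega_0^+)}=\exp(-\ad_{\omega_0^+})$, the series for $\dexp_{-\xi_1}$, and $\bigl(\pi_{p_0}\dexp_{-\omega_0^+}\bigr)^{-1}=\ad_{\omega_0^+}/\sinh(\ad_{\omega_0^+})$ on $\lie{m}_{p_0}$ (the corollary of \autoref{prop:adeven}), and recalling that $\pi_{p_0}$ kills every Lie monomial of odd total degree, one gets $\frac{\partial\omega_0^+}{\partial\xi_1}=-\bigl(\id+\frac{1}{3}\ad_{\omega_0^+}^2+\frac{1}{2}\ad_{\omega_0^+}\ad_{\xi_1}+\frac{1}{6}\ad_{\xi_1}^2\bigr)+\mathcal{O}(K^4)$. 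Note that, in contrast with \autoref{prop: ondiag}, there is \emph{no} cancellation of the identity term here, which is why the leading coefficient in the bound is $\frac{1}{2}$ and not $0$. Taking operator norms, substituting $\omega_0^+=2\phi_0$, and estimating the quadratic $\ad$-terms by the curvature constant via \eqref{eq:defcurvature} (so that $\norm{\ad_\mu\ad_\nu}\le K^2\norm{\mu}\norm{\nu}$ in the operator norm for $\mu,\nu\in\lie{m}_{p_0}$) yields the asserted estimate for $\frac{\partial\phi_0}{\partial\xi_1}$; the bound for $\frac{\partial\phi_N}{\partial\xi_{N-1}}$ follows from the mirror-image computation with $\omega_N^-$ in place of $\omega_0^+$.

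I do not expect a genuine obstacle in this proof. The only mildly delicate ingredient is the bookkeeping of the second-order $\ad$-terms in the composed series and the parity argument for $\pi_{p_0}$ — but both already occur verbatim in \autoref{prop:adeven} and in the proof of \autoref{prop: ondiag}, so nothing new is needed; the rest is routine algebra.
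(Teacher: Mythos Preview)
Part (i) is fine and matches the paper. For part (ii), however, your plan to recycle the on-diagonal expansion of \autoref{prop: ondiag} has a real gap. The parity claim ``$\pi_{p_0}$ kills every Lie monomial of odd total degree'' and the estimate $\norm{\ad_\mu\ad_\nu}\le K^2\norm{\mu}\,\norm{\nu}$ both rely on all the atoms lying in a \emph{single} subspace $\lie{m}_p$: the bracket relations \eqref{eq: algcond} giving the $\ZZ/2$-grading and the definition \eqref{eq:defcurvature} of $K$ are stated for $\xi,\eta,\zeta\in\lie{m}_p$ with the same $p$. In your formula $\omega_0^+$ and the projector $\pi_{p_0}$ refer to $\lie{m}_{p_0}$, whereas $\xi_1$ and $\delta\xi_1$ lie in $\lie{m}_{p_1}$; these subspaces of $\lie{g}$ are related by an $\Ad_g$, not equal. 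Thus, for instance, $[\omega_0^+,\delta\xi_1]$ need not lie in $\lie{h}_{p_0}$, so $\pi_{p_0}$ need not annihilate it, and the mixed second-order terms cannot be bounded by $K^2$ via \eqref{eq:defcurvature}. This problem is absent in \autoref{prop: ondiag} precisely because there $\xi_i$, $\omega_i^+$ and $\delta\xi_i$ all sit in the same $\lie{m}_{p_i}$.

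The paper avoids this by treating $\phi_0$ as in the \emph{off}-diagonal argument of \autoref{prop: offdiag} rather than the on-diagonal one: from $\exp(2\phi_0)\act p_0=\exp(-\xi_1)\act p_1$ one differentiates implicitly and immediately takes norms of the resulting tangent vectors, using that $G$ acts by isometries, obtaining $2\,\norm{\pi_{p_0}\dexp_{-2\phi_0}\delta\phi_0}=\norm{\pi_{p_1}\dexp_{\xi_1}\delta\xi_1}$. Then \autoref{lem: dexpbound} is applied separately at $p_0$ and at $p_1$, and a Taylor expansion of $x/\sin x$ and $\sinh x/x$ gives exactly the stated constants $\tfrac13$ and $\tfrac{1}{12}$. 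Note, incidentally, that even if your series expansion could be made rigorous, its second-order part $\tfrac12\bigl(1+\tfrac13K^2\norm{\omega_0^+}^2+\tfrac12K^2\norm{\omega_0^+}\,\norm{\xi_1}+\tfrac16K^2\norm{\xi_1}^2\bigr)$ with $\omega_0^+=2\phi_0$ would produce $\tfrac23K^2\norm{\phi_0}^2$ and an extra cross term, not the sharper bound asserted in the proposition.
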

  \begin{proof}
    \mbox{}
  \begin{enumerate}[label=\upshape(\roman*)]
  \item
    For clamped splines, $\phi_0$ and $\phi_N$ are constant.
  \item
    For natural splines, we have
  \[
    \exp(2\phi_0(\bvec{\xi}))\act p_0 = \exp(-\xi_1)\act p_1
    .
  \]
  Differentiating and taking norms, we get
  \[
  2\norm{\pi_{p_0} \dexp_{-2\phi_0} \delta \phi_0} = \norm{\pi_{p_1}
    \dexp_{\xi_1} \delta \xi_1}
  ,
  \]
  where $\delta \phi_0= \frac{\partial \phi_0(\bvec{\xi})}{\partial
    \xi_1}\delta \xi_1$.
  \autoref{lem: dexpbound} now gives
  \[ \norm{\delta \phi_0} \le \frac{1}{2}\frac {2K\norm{\phi_0}}{\sin(2K \norm
      {\phi_0})} \frac{ \sinh(K \norm{\xi_1})}{K\norm{\xi_1}}\norm{\delta \xi_1}
  ,
  \]
  and the claim follows by a Taylor expansion of $\frac{x}{\sin(x)}$ and
  $\frac{\sinh(x)}{x}$.
  The corresponding bound and proof for $\norm*{\frac{\partial
      \phi_N(\bvec{\xi})}{\partial \xi_{N-1}}}$ is entirely symmetrical.
  \end{enumerate}
  \end{proof}

  \subsection{Convergence}\label{sub:convergence_proof_final}

  We are now ready to prove the convergence theorem.

  \begin{proof}[Proof of \autoref{thm:main_result_convergence}]
  A consequence of \autoref{prop: invarea} is that $\bvec{\phi}$ has an invariant
  region for small enough $D$. We need to also establish that it is a
  contraction. It is sufficient to show that there exists an $\alpha \in (0,1)$
  such that $\norm{\Tan_{\bvec{\xi}} \bvec{\phi}}\le \alpha$ in an invariant region.

  We have
  \[\norm{\Tan_{\bvec{\xi}}\bvec{\phi}} = \max_{0\le i \le N} \norm{\Tan_{\bvec{\xi}} \phi_i} \]
  as well as
  \[\begin{aligned}
      \norm{\Tan_{\bvec{\xi}} \phi_0} &= \norm*{\frac{\partial \phi_0(\bvec{\xi})}{\partial \xi_1}}
      ,
      \\
      \norm{\Tan_{\bvec{\xi}} \phi_i} &= \norm*{\frac{\partial \phi_i(\bvec{\xi})}{\partial \xi_{i-1}}} + \norm*{\frac{\partial \phi_i(\bvec{\xi})}{\partial \xi_{i}}} + \norm*{\frac{\partial \phi_i(\bvec{\xi})}{\partial \xi_{i+1}}}
      ,
      \\
      \norm{\Tan_{\bvec{\xi}} \phi_N} &= \norm*{\frac{\partial \phi_N(\bvec{\xi})}{\partial \xi_{N-1}}}
      .
  \end{aligned}
  \]

  By \autoref{prop: offdiag}, \autoref{prop: ondiag}, and \autoref{prop: boundary}, we can bound the terms appearing with a Taylor expansion to obtain
  \[\begin{aligned}
  \norm{\Tan_{\bvec{\xi}} \phi_i} \le
  &\frac{1}{4}  \left(1+ \frac{1}{6} K^2 \norm{\omega_i^-(\bvec{\xi})}^2 + \frac{1}{6} K^2\norm{\xi_i}^2\right) \\
  &+ \frac{1}{4} K^2\left(\frac{1}{3} \norm{\omega_i^-(\bvec{\xi})}^2 + \frac{1}{6} \norm{\xi_i}^2 + \frac{1}{2} \norm{\xi_i}\norm{\omega_i^-(\bvec{\xi})}\right) \\
  &+ \frac{1}{4} K^2\left(\frac{1}{3} \norm{\omega_i^+(\bvec{\xi})}^2 + \frac{1}{6} \norm{\xi_i}^2 + \frac{1}{2} \norm{\xi_i}\norm{\omega_i^+(\bvec{\xi})}\right) \\
  &+ \frac{1}{4}  \left(1+ \frac{1}{6} K^2 \norm{\omega_i^+(\bvec{\xi})}^2 + \frac{1}{6} K^2\norm{\xi_i}^2\right)+ \mathcal{O}(K^4\norm{\bvec{\omega}}^4, K^4\norm{\bvec{\xi}}^4)\\
  \le & \frac{1}{2} + \frac{1}{4} K^2 \norm{\bvec{\omega}}^2 + \frac{1}{6} K^2 \norm{\bvec{\xi}}^2 + \frac{1}{4} K^2\norm{\bvec{\xi}}\norm{\bvec{\omega}} + \mathcal{O}(K^4 \norm{\bvec{\omega}}^4, K^4\norm{\bvec{\xi}}^4).
  \end{aligned}
  \]
  We can now use \eqref{eq:omegabound} and \autoref{prop: boundary} to obtain a bound of the form
  \[
  \norm{\Tan_{\bvec{\xi}} \bvec{\phi}} \le \frac{1}{2} + \mathcal{O}(K^2D^2, K^2\norm{\bvec{\xi}}^2).
  \]
  It is therefore clear that when $K\max(D, \norm{\bvec{\xi}})$ is sufficiently small, $\bvec{\phi}$ is a contraction.
  When $D$ approaches zero, the smallest $V$ satisfying  \eqref{eq: Vineq} also approaches zero.
  Therefore, when $D$ is small enough, there is a $V$ such that
  \begin{itemize}
  \item $\setc[\Big]{ \bvec{\xi}}{\norm{\bvec{\xi}}\le V}$ is a compact invariant region and
  \item $\bvec{\phi}$ is a contraction on that region.
  \end{itemize}
  Therefore, the fixed point iteration converges to a solution.
  \end{proof}
  See \autoref{fig:proofplot} for an illustration of the final argument in the proof.

  \begin{figure}
    \centering
    \begin{tikzpicture}
      \node[anchor=south west, inner sep=0] (image) at (0,0) {\includegraphics[width=0.6\textwidth]{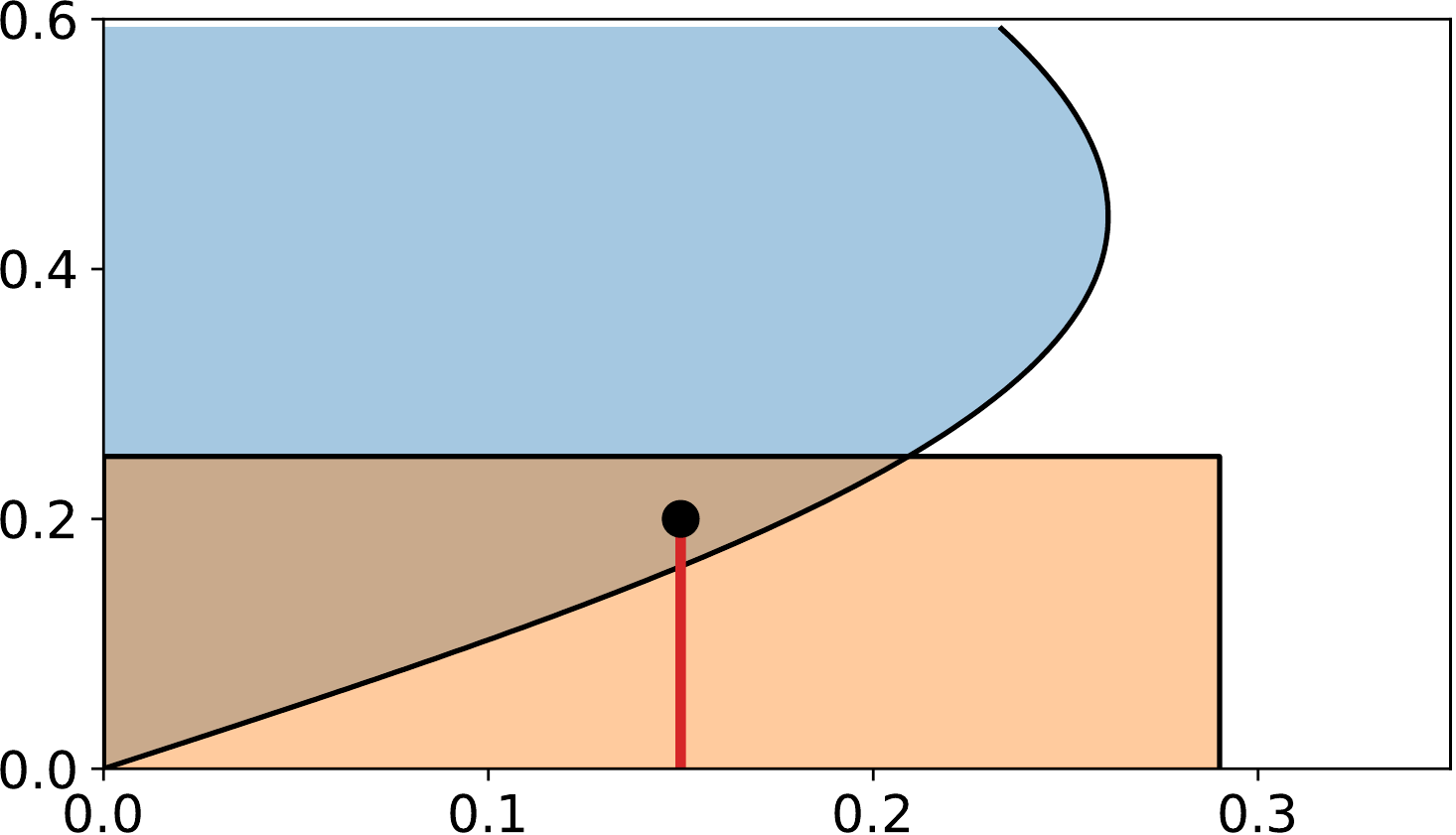}};
      \begin{scope}[x={(image.south east)},y={(image.north west)}]
        \coordinate (xlabel) at (0.5,0.03) {};
        \coordinate (ylabel) at (0.00,0.55) {};
        \coordinate (KVlabel) at (0.46,0.39) {};
        \coordinate (ineq1label) at (0.47,0.2) {};
        \coordinate (ineq2label) at (0.845,0.45) {};
        \coordinate (conditionlabel) at (0.75,0.74) {};
        \node[left] at (conditionlabel) {\color[HTML]{000000}condition \eqref{eq: Vineq}};
        \node[below left, rotate=0] at (ineq2label) {\color[HTML]{000000}$\norm{\Tan_{\bvec{\xi}} \bvec{\phi}}<1$};
        \node[right] at (ineq1label) {\color[HTML]{d62728}$\norm{\bvec{\xi}}<KV$};
        \node[left] at (KVlabel) {$KV$};
        \node[below] at (xlabel) {$KD$};
        \node[above,rotate=90] at (ylabel) {$\norm{\bvec{\xi}}$};
      \end{scope}
    \end{tikzpicture}
    \caption{An illustration of the convergence argument. 
      The upper curved region (light blue) denotes the values $(KD, KV)$ satisfying \eqref{eq: Vineq}.
      The lower rectangular region (light orange) illustrates the condition $K\max(D, \norm{\bvec{\xi}})$ sufficiently small.
      ``Sufficiently small'' is not quantified, and the region is just an illustration.
      To ensure a solution, $KV$ has to be in the upper curved region, and the line below $KV$ has to be contained in the lower rectangular region.
    }
    \label{fig:proofplot}
  \end{figure}

  \section{Python implementation}
  \label{sec:python}

  Here we briefly indicate how to use our \texttt{Python} implementation to compute splines in various geometries.
  First, one has to install the following package by following the installation instructions:
  \begin{center}
    \url{https://github.com/olivierverdier/bsplinelab}
  \end{center}

  The minimal necessary import is
  \begin{python}
  from bspline.interpolation import Symmetric, cubic_spline
  \end{python}

  Here is an example of how to interpolate between random points in the Euclidean case:
  \begin{python}
  # 10 random points in R^4:
  interpolation_points = np.random.rand(10,4) 
  b = cubic_spline(Symmetric, interpolation_points)
  \end{python}
  The result is then a $C^2$ function \pyth!b! which is equal to the interpolation  points at integer points on the interval $[0,9]$.

  In order to interpolate on, for instance, a sphere, one would do as follows:
  \begin{python}
  from bspline.geometry import Sphere
  # 3 points on the sphere:
  interpolation_points = np.array([[1.,0,0], [0,1,0], [0,0,1]])
  b = cubic_spline(Symmetric, interpolation_points, geometry=Sphere())
  \end{python}
  The function \pyth!b! is now a function defined on the interval $[0,2]$ such that it is $C^2$, is interpolating the prescribed points at the integer points $0$, $1$ and $2$, and is on the sphere for all points in between.

  For further information on \texttt{bsplinelab}, we refer to the package documentation and the available code examples.

  \bibliographystyle{amsplainnat} 
  \bibliography{riemann}

  \end{document}